\newcommand{\Ev}{\mathbf{E}}
\newcommand{\Dv}{\mathbf{D}}
\newcommand{\Pv}{\mathbf{P}}
\newcommand{\uv}{\mathbf{u}}
\newcommand{\nv}{\mathbf{n}}
\newcommand{\tv}{\mathbf{t}}
\newcommand{\fv}{\mathbf{f}}
\newcommand{\St}{\mathbf{S}}
\newcommand{\Tt}{\mathbf{T}}
\newcommand{\It}{\mathbf{I}}
\newcommand{\betat}{\boldsymbol{\beta}}
\newcommand{\weakconv}{{\rightharpoonup}}
\newcommand{\weakstarconv}{\overset{\ast}{\rightharpoonup}}
\newcommand{\cten}{\mathbf{c}}
\newcommand{\dten}{\mathbf{d}}
\newcommand{\hten}{\mathbf{h}}
\newcommand{\dissipation}{\mathcal{D}}
\newcommand{\subdiff}{\partial}
\newcommand{\spaceU}{\mathbb{U}}
\newcommand{\spaceW}{\mathbb{W}}
\newcommand{\spaceV}{\mathbb{H}}
\newcommand{\spaceD}{\mathbb{D}}
\newcommand{\spaceS}{\mathbb{S}}
\newcommand{\spaceP}{\mathbb{P}}
\newcommand{\spaceX}{\mathbb{X}}
\newcommand{\Rbarplus}{\overline{\mathbb{R}}^+}
\newcommand{\ww}{\mathbf{w}}
\newcommand{\uu}{\mathbf{w}}
\newcommand{\zzz}{\mathbf{z}}
\newtheorem{theorem}{Theorem}
\newtheorem{definition}{Definition}
\newtheorem{lemma}{Lemma}
\newcommand{\opdiv}{\operatorname{div}}
\newcommand{\opcurl}{\operatorname{curl}}
\newcommand{\oldnew}[2]{{#2}}
\begin{document}
	
	\title{The polarization process of ferroelectric materials analyzed in the framework of variational inequalities}
	
	\author{Astrid S. Pechstein*, Martin Meindlhumer and Alexander Humer}

	\date{July 2019}
	\maketitle
	
	\abstract{We are concerned with the mathematical modeling of the polarization process in ferroelectric media. We assume that this dissipative process is governed by two constitutive functions, which are the free energy function and the dissipation function. The dissipation function, which is closely connected to the dissipated energy, is usually non-differentiable.  Thus, a minimization condition  for  the  overall  energy  includes  the subdifferential of  the  dissipation  function.   This condition can also be formulated by way of a variational inequality in the unknown fields strain,  dielectric  displacement,  remanent polarization  and remanent strain. 
		We analyze the mathematical well-posedness of this problem.
		We provide an existence and uniqueness result for the time-discrete update equation. Under stronger assumptions, we can prove existence of a solution to the time-dependent variational inequality.
		To solve the discretized variational inequality, we use mixed finite elements, where mechanical displacement and dielectric displacement are unknowns,  as well as polarization (and,  if included in the model,  remanent strain). It is then possible to satisfy Gauss' law of zero free charges exactly.  We propose to regularize the dissipation function and solve for all unknowns at once in a single Newton iteration. We present numerical examples gained in the open source software package Netgen/NGSolve.}

\section{Introduction} \label{sec:intro}

A thermodynamical framework for the description of ferroelectric materials based on the Helmholtz free energy was originally provided in the series of papers \cite{BassiounyGhalebMaugin:1988a,BassiounyGhalebMaugin:1988b,BassiounyMaugin:1989a,BassiounyMaugin:1989b}
by Bassiouny, Ghaleb and Maugin. Their theory allows to describe multiaxial electromechanical loading procedures.
The introduced notions are similar to elasto-plasticity, including internal variables, yield (or switching) criteria and hardening moduli.
Explicit choices of energy and switching criteria were provided by Cocks and McMeeking \cite{CocksMcMeeking:1999} for the one-dimensional case.
The multi-dimensional case followed in the works of McMeeking and Landis \cite{McMeekingLandis:2002} and Landis \cite{Landis:2002}. In the former reference, the remanent polarization vector is the only internal unknown, and the polarization stress is linked directly to the remanent polarization. Contrarily, in the latter work remanent polarization and strain are independent of each other, but are determined by a common switching condition. With these theories, not only hysteresis loops can be tracked, but also butterfly hystereses are predicted correctly. The models were validated against measurements provided by Huber and Fleck \cite{HuberFleck:2001} for non-proportional loading procedures.

Another approach based on the thermodynamic framework due to the group around Maugin is that by Kamlah and Tsakmakis \cite{KamlahTsakmakis:1999}, see also \cite{Kamlah:2001}. They use a set of different switching and saturation conditions to determine the evolution of remanent polarization and polarization strain.

Miehe, Rosato and Kiefer \cite{MieheRosatoKiefer:2011} introduced an incremental variational principle for the even more general case of coupled electro-magneto-mechanics. They distinguish between \emph{energy}-based and \emph{enthalpy}-based models. For the latter, the independent unknowns are strain and electric field, whereas for the former, strain and dielectric displacement are independent. While most finite element formulations use enthalpy-based models discretizing the electric potential, we provide theory for an energy-based setting with an independent dielectric displacement.

An uni-dimensional energy-based model is presented by Sands and Guz \cite{SandsGuz:2013}. Semenov et al.\ \cite{SemenovKesslerLiskowskyBalke:2006} present a three-dimensional energy-based formulation using a vector potential for the dielectic displacement. Then, the dielectric displacement vector satisfies Gauss' law of zero divergence automatically. Contrarily, we propose to use $H(\opdiv)$ conforming finite elements as can be found in the context of mixed methods \cite{BoffiBrezziFortin:2013}. Gauss' law is then introduced as a constraint enforced exactly by a Lagrangian multiplier. This way accuracy of the electric unknowns is improved, as no derivatives have to be taken.
In some sense, the approach by Klinkel \cite{Klinkel:2006} may be seen as diametrically opposed, where an irreversible electric field is introduced instead of the remanent polarization vector.

\oldnew{}{All above mentioned models are macroscopic phenomenological ones. A different approach is  modeling on the micro- or meso-scale, we cite an early model by Hwang et al.\ \cite{HWANG19952073}. Computations on unit cells in nano-meter range, where polarization domains have to be resolved by the finite element mesh, represent the homogenized material behavior. This leads to more complex material models involving different internal variables, usually providing a different -- probably higher -- level of accuracy at higher cost as compared to the purely phenomenological models, as was observed e.g.\ by \cite{LinMuliana:2014,jayendiran2016finite}. }

In the current paper, we aim at proving existence and uniqueness of a solution to the problem of finding an update solution in the polarization process of ferroelectric media.
To this end, we describe the polarization of ferroelectric media as a dissipative process in the mathematical framework of variational inequalities. We start from an energy-based model of the problem, from which we derive time-dependent variational equations and inequalities. 
The independent unknowns are then strain (or displacement), dielectric displacement, remanent polarization, and, if included in the model, remanent polarization strain.
We provide an abstract mathematical framework for this problem. \oldnew{}{For a comprehensive overview on the mathematical modeling of dissipative systems we refer to the monograph by Mielke and Roub\'{\i}\v{c}ek \cite{MielkeRoubicek:2015}.} Similar abstract problems have been analyzed in context of elasto-plasticity (see e.g. the monograph by Han and Reddy \cite{HanReddy:1999}) or contact mechanics (we refer to Sofonea and Matei \cite{SofoneaMatei:2011}). We proceed in a similar manner as in the first reference \cite{HanReddy:1999} and see that, under the standard assumption of convexity of the free energy function, existence and uniqueness of a solution to the time-discrete update problem can be shown. Two standard material models are demonstrated to fit into this framework.
These results fit well with the findings in \cite{StarkNeumeisterBalke:2016a,BotteroIdiart:2018}, where stability issues are discussed.
Under further assumptions on the energy, also existence of a time-dependent solution can be guaranteed. However, we see that these stronger assumptions are only satisfied for very simple models not including saturation. Whether it is possible to weaken these requirements will be subject of further research.

This paper is organized as follows: in Section~\ref{sec:modeling} the underlying energy-based consitutive models are described, and remanent quantities, dissipation function and dissipative driving forces are introduced on a physical level. These quantities are embedded into an abstract mathematical framework of variational inequalities in Section~\ref{sec:framework}, where also all assumptions are stated. The time-discrete update equation is introduced in Section~\ref{sec:timediscr}. Existence and uniqueness of a solution update are shown. In Section~\ref{sec:timedep}, this result is used to gain existence of a time-dependent solution to the original variational inequality under stronger assumptions. Two standard material models are analyzed with respect to the abstract theory in Section~\ref{sec:application}. Computational aspects of a finite element implementation are discussed in Section~\ref{sec:fe}, and numerical results are provided in Section~\ref{sec:numerics}.

\section{Energy-based constitutive modeling} \label{sec:modeling}

In the following, we present a variational inequality that describes the problem of polarization of ferroelectric media. We postpone mathematical exactness concerning solution spaces, (weak) differentiability and other issues to Section~\ref{sec:framework}, in order not to complicate matter too much. Let $\Omega \subset \mathbb{R}^d, d=2,3$ denote the body of interest. Concerning the mechanical quantities, we use $\uv$ for the displacement field and $\Tt$ for the total stress. We assume small deformations, thus we identify deformed and undeformed configuration, and use the linear strain $\St = \frac{1}{2} (\nabla \uv + \nabla \uv^T)$. Additionally, we assume a quasistatic regime. The electric part of the problem is then characterized by the electric potential $\phi$, while $\Ev = - \nabla \phi$ is the electric field. The dielectric displacement vector shall be denoted as $\Dv$. Last, in the polarization problem we are interested in finding the remanent polarization $\Pv^{i}$ and the remanent strain $\St^{i}$.

We assume that the material is characterized by a Helmholtz free energy function 
\begin{align}
\Psi &= \Psi^r(\Dv, \Pv^{i}, \St, \St^{i}) + \Psi^{i}(\Pv^{i}, \St^{i}),
\end{align}
which consists of a \emph{reversible} or \emph{stored} part $\Psi^r$, and a part $\Psi^{i}$ that is associated only with the internal variables of remanent polarization and strain. This part may tend to infinity as the polarization approaches saturation. For a similar characterization, we refer to \cite{Landis:2002}. 
%accounting for domain switching and a \emph{saturation} part $\Psi^{s}$ which tends to infinity as the material becomes saturated. Note that 

The independent reversible unknowns are strain and dielectric displacement, where the latter additionally has to satisfy Gauss' law,
\begin{align}
\opdiv \Dv &= 0.
\end{align}
Electric field and stress are dependent quantities, and defined as derivatives of the free energy with respect to dielectric displacement and strain,
\begin{align}
\Ev &= \frac{\partial \Psi}{\partial \Dv}, & % = \betat \cdot (\Dv - \Pv^{i}) - \hten : (\St - \St^{i})\\
\Tt &= \frac{\partial \Psi}{\partial \St}. \label{eq:et}
\end{align}

As strain and dielectric displacement are not constrained, or constrained to the linear subspace of divergence-free functions in the latter case, there holds the following variational equation,
\begin{align}
\Ev \cdot \delta \Dv + \Tt \cdot \delta \St &= \delta W_{ext} && \text{for all } \delta \Dv, \opdiv \delta \Dv = 0 \text{ and } \delta \St = \St(\delta \uv). \label{eq:virtualworks}
\end{align}
In \eqref{eq:virtualworks}, all virtual strains and divergence-free virtual dielectric displacements are considered, where $\delta \Dv$ and $\delta \uv$ have to satisfy the respective boundary conditions. On the right hand side, all virtual work by external forces is summarized as $\delta W_{ext}$. This formula is well-known as \emph{principle of virtual works}.

Polarization of ferroelectric materials is a dissipative process. The \emph{dissipative driving forces} dual to the irreversible quantities are
\begin{align}
\hat \Ev &= -\frac{\partial \Psi}{\partial \Pv^{i}}, &
\hat \Tt &= -\frac{\partial \Psi}{\partial \St^{i}}.  \label{eq:hatehatt}
\end{align}
The dissipation is then given by the inner product of these driving forces and the \emph{dissipative fluxes} $\dot \Pv^{i}$ and $\dot \St^{i}$,
\begin{align}
\dissipation = \hat \Ev \cdot \dot \Pv^{i} + \hat \Tt : \dot \St^{i}.
\end{align}
The dissipation function $\Phi$ relates the driving forces $\hat \Ev$, $\hat \Tt$ to the driving rates $\dot \Pv^{i}$, $\dot \St^{i}$. Typically, the dissipation function is not smooth, but weakly lower semicontinuous and allows for a \emph{subdifferential}\footnote{The subdifferential of some function $j$ with respect to $u$ is denoted by $\subdiff_u j$ and represents a set, namely
	\begin{align}
	z& \in \subdiff_u j(u) & \Longleftrightarrow && j(v) - j(u) &\geq z\cdot (v-u) & \text{for all }v.
	\end{align}}.
The driving forces are contained in the respective subdifferentials, i.e.
\begin{align}
\hat \Ev &\in \partial_{\dot \Pv^{i}} \Phi(\dot \Pv^{i}, \dot \St^{i}) &  \text{and} &&
\hat \Tt &\in \partial_{\dot \St^{i}} \Phi(\dot \Pv^{i}, \dot \St^{i}). \label{eq:subdiff}
\end{align}
By definition of the subdifferential, eq. \eqref{eq:subdiff} is equivalent to the variational inequality
\begin{align}
-\hat \Ev \cdot (\tilde \Pv^{i} - \dot \Pv^{i}) - \hat \Tt \cdot (\tilde \St^{i} - \dot \St^{i}) + \Phi(\tilde \Pv^{i}, \tilde \St^{i}) - \Phi(\dot \Pv^{i}, \dot \St^{i}) \geq 0 && \forall \tilde \Pv^{i}, \tilde \St^{i}. \label{eq:ineq_pi}
\end{align}
To arrive at one single variational inequality, we add up \eqref{eq:virtualworks} and \eqref{eq:ineq_pi}. In \eqref{eq:virtualworks}, we use the admissible virtual dielectric displacement $\delta \Dv = \tilde \Dv - \dot \Dv$, where $\opdiv \tilde \Dv = \opdiv \dot \Dv = 0$. The virtual strain is defined from the admissible displacement update $\tilde \uv$ and $\dot \uv$ by $\delta \St = \St(\tilde \uv) - \St(\dot \uv)$. With these choices, we deduce for all $\tilde \uv, \tilde \Dv, \tilde \Pv^{i}$ and $\tilde \St^{i}$,
\begin{align}
\begin{split}
\Ev \cdot (\tilde \Dv - \dot \Dv) + \Tt : (\St(\tilde \uv) - \dot \St) - \hat \Ev \cdot (\tilde \Pv^{i}- \dot \Pv^{i}) - \hat \Tt : (\tilde \St^{i} - \dot \St^{i})&\\ + \Phi(\tilde \Pv^{i}, \tilde \St^{i}) - \Phi(\dot \Pv^{i}, \dot \St^{i}) &\geq \delta W^{ext}. 
\end{split}\label{eq:varin_phys}
\end{align}

\section{A mathematical framework} \label{sec:framework}
We define the variational inequality that describes the polarization process in a mathematical framework. Therefore, we introduce compact notation, which is compatible with the literature on variational inequalities, especially with the monograph \cite{HanReddy:1999}. We use
\begin{align}
\ww &= [\St(\uv), \Dv, \St^{i}, \Pv^{i}]^T && \text{for the reactions}\\
\zzz &= [\St(\tilde \uv), \tilde \Dv, \tilde \St^{i}, \tilde \Pv^{i}]^T && \text{for admissible rates}. %\\
%\SSigma &= [\Tt, \Ev, \hat \Tv, \hat Ev]^T && \text{for the forces}.
\end{align}
A priori, we assume the different quantities to live in the following Hilbert spaces,
\begin{align}
\uv \in \spaceU &:= \{\uv \in [H^1(\Omega)]^d: \uv = \mathbf{0} \text{ on } \Gamma_{fix}\},\\
\Dv \in \spaceD_0 &:= \{\Dv \in\spaceD: \opdiv \Dv = 0\} \nonumber\\
& \text{with} \ \spaceD := \{\Dv \in H(\opdiv,\Omega): \Dv\!\cdot\!\nv = 0 \text{ on } \Gamma_{ins}\},\\
\St^{i} \in \spaceS &:=[L^2(\Omega)]^{d\times d}_{sym},\\
\Pv^{i} \in \spaceP &:=[L^2(\Omega)]^d.
\end{align}
Compound spaces for the compact notation are then,
\begin{align}
\spaceV &:= \spaceU \times \spaceD \times \spaceS \times \spaceP, &
\spaceV_0 &:=  \spaceU \times \spaceD_0 \times \spaceS \times \spaceP.
\end{align}
Depending on the definition of the irreversible energy $\Psi^{i}$, the free energy $\Psi$ may tend to infinity as the material approaches polarization saturation. We introduce the \emph{effective domain} of $\Psi$ by (cf. \cite[p~73]{HanReddy:1999})
\begin{align}
\spaceX &:= \{ \zzz \in \spaceV: \Psi(\zzz) < \infty\}, & \spaceX_0 := \spaceX \cap \spaceV_0.
\end{align}

We define the nonlinear operator $A: \spaceX \to \spaceV^*$ by its action
\begin{align}
\langle A(\ww), \zzz \rangle &:= \Big\langle \frac{\partial \Psi}{\partial \ww}(\ww) , \zzz \Big\rangle. %,\\
%\langle \ell, \zzz\rangle &:= \int_\Omega \fv \cdot \tilde \uv\, dx.
\end{align}
%To include saturation, we add the nonlinear functional
%\begin{align}
%j_s(\ww, \zzz) := \left\{ \begin{array}{ll} \Big\langle \frac{\partial \Psi^{s}}{\partial \ww}(\ww) , \zzz \Big\rangle & \text{if } \Psi^{s}(\ww) < \infty,\\
%\infty & \text{else.} 
%\end{array} \right. \label{eq:defjs}
%\end{align}
Note that, in physical meaning according to \eqref{eq:et} and \eqref{eq:hatehatt}, the operator $A$ maps reactions to forces, i.e.
\begin{align}
A(\ww) = A(\St(\uv), \Dv, \St^{i}, \Pv^{i}) = [\St(\uv), \Dv, -\St^{i}, -\Pv^{i}]^T.
\end{align}
The work of external forces $\delta W^{ext}$ shall be represented by the linear functional $\ell \in \spaceV^*$. Note that, for the present choice of independent unknowns $\uv$ and $\Dv$, the external work also contains boundary conditions on the electric potential. In the exemplary case of a body under mechanical volume load $\fv$, surface tractions $\tv$ on the boundary part $\Gamma_{trac}$, and a prescribed potential $\phi_0$ on the electrodes $\Gamma_{el}$, this functional is defined as
\begin{align}
\langle \ell, \zzz\rangle &:= \int_\Omega \fv \cdot \tilde \uv\, dx +  \int_{\Gamma_{trac}} \tv \cdot \tilde \uv\, ds - \int_{\Gamma_{el}} \phi_0 \, \tilde \Dv \cdot \nv\, ds.
\end{align}
Thus, the variational inequality \eqref{eq:varin_phys} translates to the abstract variational inequality of the form: Find $\uu: [0,T] \to \spaceX_0$ such that
\begin{align}
\langle A(\ww), \zzz - \dot \ww \rangle +  \Phi(\zzz) - \Phi(\dot \ww) &\geq \langle \ell, \zzz - \dot \ww \rangle &&\text{for all } \zzz \in \spaceV_0. \label{eq:varin_time}
\end{align}

The variational inequality can be extended to hold for all $\zzz \in \spaceV$, i.e. also for dielectric displacement updates with non-zero divergence. To this end, the dissipation function needs to be augmented by this restriction, see \cite{HanReddy:1999}. Indeed, the augmented dissipation maps all dielectric displacements with non-zero divergence to infinity. In the following, it shall be denoted by $j: \spaceV \to \Rbarplus := \mathbb{R} \cup \{+\infty\}$ and is defined as
\begin{align}
j(\zzz) := \Phi(\zzz) + \sup_{\phi \in L^2(\Omega)} \int_\Omega \opdiv \tilde\Dv \,  \phi\, dx. \label{eq:j}
\end{align}
Note that, when restricted to $\spaceV_0$, $j(\cdot)$ and $\Phi(\cdot)$ are equivalent. Therefore, we will use $j(\cdot)$ in the following, and consider the variational inequality (equivalent to \eqref{eq:varin_time}),
\begin{align}
\langle A(\ww), \zzz - \dot \ww \rangle +  j(\zzz) - j(\dot \ww) &\geq \langle \ell, \zzz - \dot \ww \rangle &&\text{for all } \zzz \in \spaceV. \label{eq:varin_time_j}
\end{align}

\subsection{Assumptions} \label{sec:assumptions}
We collect assumptions on the various functionals that we need to proof existence and uniqueness of the update in a time-discrete scheme. In Section~\ref{sec:application}, we show whether these assumptions are satisfied for some standard material models. The following definitions are taken from Brezis \cite{Brezis:1968}.

\begin{definition}\label{def:clc}
	Let $E$ be a Hilbert space. A  functional $\phi: E \to \Rbarplus$ is called \emph{convex} if and only if for all $\rho \in [0,1]$, $u_1, u_2 \in E$,
	\begin{align}
	\phi(\rho u_1 + (1-\rho) u_2) \leq \rho \phi(u_1) + (1- \rho) \phi(u_2).
	\end{align}
	The functional $\phi$ is called \emph{lower semicontinuous} if and only if for all $u \in E$, and all sequences $u_n \to u$ in $E$, 
	\begin{align}
	\liminf \phi(u_n) \geq \phi(u).\label{eq:lc}
	\end{align}
	It is \emph{weakly lower semicontinuos} if \eqref{eq:lc} holds for all weakly convergent sequences $u_n \rightharpoonup u$.
	
	An operator $A: X \to E$ is called hemicontinuous on a convex subset $X \subset E$ if for all $x, y \in X$ the mapping
	\begin{align}
	[0,1] \to \mathbb{R}, t \mapsto \langle A((1-t) x + t y), x-y\rangle
	\end{align}
	is continuous.
\end{definition}
As a minimal assumption we demand the Helmholtz free energy to be lower semicontinuous and convex. Moreover, its Frechet (or full) derivative shall exist and define the operator~$A$:
\begin{equation}
\begin{split}
\Psi:\spaceX \to \mathbb{R} &\text{ is convex, lower semicontinuous and}\\
&\text{Frechet differentiable with } A=D\Psi \label{eq:Psiconv} %\\
%A: \spaceX \to \spaceV^* &\text{ is defined by its application }
%\langle A(\ww), \zzz\rangle = \frac{\partial \Psi}{\partial \zzz}(\ww).
\end{split}
\end{equation}
To get convergence estimates and stability bounds, we further need that $A: \spaceX_0 \to \spaceV^*$ is strongly monotone, i.e. there exist $m > 0$ such that for all $\ww_1, \ww_2 \in \spaceX_0$,
%\begin{equation}
%\begin{split}
%	\text{Lipschitz continuity: }& \langle A(\ww_1) - A(\ww_2), \zzz\rangle \leq L \|\ww_1 - \ww_2\|_{\spaceV} \|\zzz\|_{\spaceV},\\
%	\text{strong monotonicity: }& \langle A(\ww_1) - A(\ww_2), \ww_1 - \ww_2 \rangle \geq m \|\ww_1 - \ww_2\|_{\spaceV}^2.
%\end{split}\label{eq:Acont}
%\end{equation}
\begin{equation}
\langle A(\ww_1) - A(\ww_2), \ww_1 - \ww_2 \rangle \geq m \|\ww_1 - \ww_2\|_{\spaceV}^2.
\label{eq:Acont}
\end{equation}

The dissipation function $\Phi: \spaceV \to \mathbb{R}$ as well as its augmented counterpart $j: \spaceV \to \Rbarplus$  need not be differentiable, but non-negative, proper, and positively homogeneous,
\begin{align} \label{eq:jproper}
j(\ww) \geq 0  \text{ for all } \ww,\zzz \in \spaceV,
&\text{ and it exists at least one }\ww\in \spaceV \text{ with } j(\ww) < \infty\text{, and}\\
j(\alpha \zzz) &= \alpha j(\zzz) \text{ for all } \alpha > 0, \zzz \in \spaceV.\label{eq:jposhom}
\end{align}
Additionally, we assume
\begin{align}\label{eq:jclc}
j: \spaceV \to \Rbarplus& \text{ is convex and lower semicontinuous, and}\\
j: \spaceV_0 \to \mathbb{R} & \text{ is continuous, i.e. } j(\zzz) \leq c \|\zzz\|_\spaceV \text{ for } \zzz \in \spaceV_0. \label{eq:jcont}
\end{align}
%To obtain stability estimates, moreover we assume that positive homogeneity of $j(\cdot)$, i.e. for all $\zzz \in \spaceV$ and $\alpha > 0$,
%\begin{align} \label{eq:jposhom}
%	j(\alpha \zzz) &= \alpha j(\zzz).
%\end{align}

In Section~\ref{sec:timedep}, we aim at showing existence of a solution to the time-dependent variational inequality. To accomplish this task, we need an additional assumption on the energy, namely Lipschitz-continuity of $A$,
\begin{align} \label{eq:ALipschitz}
\langle A(\ww_1) - A(\ww_2), \zzz \rangle & \leq c_A \|\ww_1 - \ww_2\|_{\spaceV}\|\zzz\|_{\spaceV} & 
\text{ for all } \ww_1, \ww_2 \in \spaceX_0, \zzz \in \spaceV.
\end{align}

\section{Time discrete update equation} \label{sec:timediscr}

We use a uniform partitioning of the time interval $[0,T]$ into $N$ sub-intervals,
\begin{align}
0 = t_0 < t_1 < \dots < t_{N-1} < t_N = T \qquad \text{with} \qquad t_{n}-t_{n-1} = \Delta T = T/N.
\end{align}
For $N$ fixed, we will define a sequence $\{\ww_n\}_{n=0}^N \in [\spaceV_0]^{N+1}$ as consecutive solutions to (spatial but time-independent) variational inequalities. We use the backward difference $\Delta \ww_n = \ww_n - \ww_{n-1}$ and $\ell_n = \ell(t_n)$. We show that this sequence is defined uniquely, and that certain stability estimates are satisfied. We use the following existence result by Brezis:

\begin{theorem}[Corollaire~30 in \cite{Brezis:1968}] \label{brezis}
	Let $E$ be a reflexible Banach space, and let $X \subset E$ be closed and convex with $0 \in X$. Let $A: X \to E^*$ be weakly pseudo-monotone and $\phi: X \to ]-\infty, +\infty]$ be convex lower semicontinuous with $\phi(0) < \infty$. If 
	\begin{align}
	\lim_{\|x\|\to \infty} \frac{\langle A(x), x\rangle + \phi(x) }{\|x\|} = \infty, \label{eq:atoinf}
	\end{align}
	then for $\ell \in E^{*}$ there exists a solution $u \in X$ to
	\begin{align}
	\langle A(u), v-u \rangle + \phi(v) - \phi(u) &\geq \langle \ell, v - u\rangle && \forall v \in X.
	\end{align}
	If $A$ is additionally strongly monotone, the solution is unique.
\end{theorem}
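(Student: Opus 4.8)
The plan is to combine a Galerkin (finite-dimensional) approximation with the coercivity hypothesis \eqref{eq:atoinf} to produce a bounded sequence of approximate solutions, and then to pass to the limit using reflexivity of $E$ together with the weak pseudo-monotonicity of $A$ and the lower semicontinuity of $\phi$. Uniqueness under strong monotonicity I would dispatch first, since it is independent of the existence argument: given two solutions $u_1, u_2 \in X$, I test the inequality for $u_1$ with $v = u_2$ and the inequality for $u_2$ with $v = u_1$, add the two, and observe that the $\phi$- and $\ell$-terms cancel, leaving $\langle A(u_1) - A(u_2), u_1 - u_2\rangle \le 0$. Strong monotonicity then forces $m\|u_1 - u_2\|^2 \le 0$, hence $u_1 = u_2$.

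For existence, I would fix an increasing sequence of finite-dimensional subspaces $E_1 \subset E_2 \subset \cdots \subset E$ whose union is dense in $E$, and set $X_n := X \cap E_n$, which is closed, convex and contains $0$. On each $X_n$ the operator $A$ restricts to a demicontinuous coercive operator on a finite-dimensional convex set, so the finite-dimensional variational inequality $\langle A(u_n), v - u_n\rangle + \phi(v) - \phi(u_n) \ge \langle \ell, v - u_n\rangle$ for all $v \in X_n$ admits a solution $u_n \in X_n$ by a standard Brouwer fixed-point / KKM argument. Testing this inequality with the admissible choice $v = 0$ yields
\begin{align*}
\langle A(u_n), u_n\rangle + \phi(u_n) \le \phi(0) + \langle \ell, u_n\rangle \le \phi(0) + \|\ell\|_{E^*}\,\|u_n\|,
\end{align*}
so that if $\|u_n\| \to \infty$ along a subsequence, the left-hand side divided by $\|u_n\|$ would remain bounded, contradicting \eqref{eq:atoinf}. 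Hence $\{u_n\}$ is bounded, and by reflexivity of $E$ together with weak closedness of the convex closed set $X$ I may extract $u_n \weakconv u \in X$.

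The decisive step is the passage to the limit in the nonlinear term, which is precisely what weak pseudo-monotonicity is designed to handle. Choosing a recovery sequence $v_n \in X_n$ with $v_n \to u$ strongly and inserting it as test function, I split $\langle A(u_n), u_n - u\rangle = \langle A(u_n), u_n - v_n\rangle + \langle A(u_n), v_n - u\rangle$; the boundedness of $\{A(u_n)\}$ in $E^*$ (pseudo-monotone operators are bounded) makes the second term vanish, while the approximate inequality bounds the first term by $\langle \ell, u_n - v_n\rangle + \phi(v_n) - \phi(u_n)$, whose limit superior is nonpositive once the $\phi$-terms are controlled. Weak pseudo-monotonicity then gives, for every fixed $v$, the lower bound $\liminf_n \langle A(u_n), u_n - v\rangle \ge \langle A(u), u - v\rangle$; combining this with $\liminf_n \phi(u_n) \ge \phi(u)$ and passing to the limit in the approximate inequality yields the desired inequality for $u$ against all $v \in \bigcup_n E_n$, and then against all $v \in X$ by density. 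The main obstacle I anticipate is exactly this matching of the convex functional $\phi$ with the Galerkin spaces: the recovery-sequence argument requires $\limsup_n \phi(v_n) \le \phi(u)$ for some $v_n \in X_n$ converging to $u$ (a Mosco-type condition), which is delicate when $\phi$ is merely lower semicontinuous and may equal $+\infty$. Equivalently, one can recast the inequality as the inclusion $\ell \in A(u) + \subdiff\phi(u)$ and appeal to surjectivity of the sum of a coercive pseudo-monotone operator and the maximal monotone operator $\subdiff\phi$, which localizes the same difficulty.
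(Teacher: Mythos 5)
You should first be aware that the paper contains no proof of this statement at all: it is quoted, statement only, as Corollaire~30 of Brezis \cite{Brezis:1968} and used as a black box in the proof of Theorem~\ref{theo:discrete}. So the only meaningful comparison is with Brezis's original argument, and your outline is in fact that same classical route: Galerkin approximation solved by Brouwer/KKM, an a priori bound from the coercivity condition \eqref{eq:atoinf} obtained by testing with $v=0$, weak compactness from reflexivity, and pseudo-monotonicity to pass to the limit. Your uniqueness argument (test each solution's inequality with the other, add, invoke strong monotonicity) is complete and correct; note only that it silently uses $\phi(u_1),\phi(u_2)<\infty$, which is legitimate because any solution of the inequality must lie in the effective domain of $\phi$.

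The existence part, however, has a genuine gap, and it is exactly the one you flag yourself: the limit passage needs, for every test element $v\in X\cap\operatorname{dom}\phi$, a recovery sequence $v_n\in X_n$ with $v_n\to v$ strongly and $\limsup_n\phi(v_n)\leq\phi(v)$. For an arbitrary convex lower semicontinuous $\phi$ with values in $]-\infty,+\infty]$ and an arbitrary increasing family of subspaces $E_n$, no such sequence need exist; indeed $X_n\cap\operatorname{dom}\phi$ may reduce to $\{0\}$ (take $\phi$ to be the indicator of a segment in a direction lying in no $E_n$), in which case every Galerkin solution is $u_n=0$ and the limit solves nothing. Flagging this obstacle is not the same as overcoming it, so the proof as written does not close. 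There are three standard repairs. First, Brezis's own: index the approximation not by a sequence of subspaces but by the filtering family (net) of \emph{all} finite-dimensional subspaces of $E$; then every fixed $v$ lies in $X_\alpha$ cofinally, no recovery sequence is needed, and no separability of $E$ is smuggled in -- this is precisely where the hypothesis of \emph{weak} (filter/net) pseudo-monotonicity, as opposed to sequential pseudo-monotonicity, is actually used. Second, if $E$ is separable, adapt the subspaces to $\phi$: choose a countable subset of $X\cap\operatorname{dom}\phi$ dense in the graph metric $\|u-v\|+|\phi(u)-\phi(v)|$ and include it in the $E_n$, so recovery sequences exist by construction. Third, regularize: replace $\subdiff\phi$ by its Moreau--Yosida approximation, solve the resulting purely pseudo-monotone problems, and pass to the limit; this is the mechanism behind the surjectivity theorem for ``pseudo-monotone plus maximal monotone'' that you mention at the end, and it does resolve (not merely relocate) the difficulty. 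Two smaller points: your claim that pseudo-monotone operators are bounded is not automatic under the definition in force here and is not needed if you test with fixed $v\in X_\alpha$ as in the first repair; and solvability of the finite-dimensional problems requires continuity of $A$ on finite-dimensional sections, which in Brezis's framework is part of the definition of (weak) pseudo-monotonicity rather than a fact available for free.
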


The main result of this section is the following:
\begin{theorem} \label{theo:discrete}
	Let $\Psi: \spaceX \to \mathbb{R}$ be convex, lower semicontinuous and Frechet differentiable as in \eqref{eq:Psiconv}, and let $A: \spaceX \to \spaceV^*$ be its derivative. Let $j: \spaceV \to \Rbarplus$ be non-negative, proper, convex, positively homogeneous and lower semicontinuous as in \eqref{eq:jproper}, \eqref{eq:jposhom} and \eqref{eq:jclc}. 
	Let moreover $A: \spaceX_0 \to \spaceV^*$ be strongly monotone on $\spaceX_0$ as in \eqref{eq:Acont}.
	%	\begin{align}
	%	\lim_{\|\zzz\|\to \infty} \frac{\langle A(\zzz), \zzz\rangle +  j(\zzz)}{\|\zzz\|} = + \infty. \label{eq:liminfinity}
	%	\end{align}
	
	Then, for $N$ fixed and any given $\{\ell_n\}_{n=0}^N$ with $\ell_n \in \spaceV^*$ and $\ell_0 = 0$, there exists a unique sequence $\{\ww_n\}_{n=0}^N$ such that $\ww_n \in \spaceX_0$ and $\Delta \ww_n \in \spaceV_0$ and
	\begin{align}
	\langle A(\ww_n), \zzz - \Delta \ww_{n}) + j(\zzz) - j(\Delta \ww_n) \geq \langle \ell_n, \zzz - \Delta \ww_n\rangle \qquad \forall\ \zzz \in \spaceV. \label{eq:varindelta}
	\end{align}
	The set of test functions can be equivalently restricted to $\zzz \in \spaceV_0$. %The sequence is unique if $A: \spaceV \to \spaceV^*$ is strongly monotone.
	%	
	%	If moreover $A: \spaceV \to \spaceV^*$ is \todo{Lipschitz continuous and} strongly monotone as in \eqref{eq:Acont}, and $\ell \in H^1(0,T,\spaceV)$, 
	With the constant of monotonicity $m$ from \eqref{eq:Acont}, the solution satisfies the stability estimate
	\begin{align}
	\|\Delta \uu_n\|_\spaceV & \leq \frac{1}{m} \|\Delta \ell_n\|_{\spaceV^*}. \label{eq:firstest} %\\ 
	%	\max_{n} \|\uu_n\|_\spaceV &\leq c \|\dot \ell\|_{L_1(0,T,\spaceV^*)}, \label{eq:secondest}\\
	%	\Delta T\, \sum_{n} \|\delta \uu_n\|_{\spaceV}^2 &\leq c \|\dot \ell\|_{L_2(0,T,\spaceV^*)}^2. \label{eq:thirdest}
	\end{align}
	
\end{theorem}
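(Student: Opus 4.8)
The plan is to build the sequence $\{\ww_n\}$ by induction on $n$, reducing each step to a single application of Theorem~\ref{brezis}. I initialize $\ww_0 = 0 \in \spaceX_0$, consistent with $\ell_0 = 0$. Before treating one step, I would dispose of the equivalence of test spaces: by the augmentation \eqref{eq:j}, $j(\zzz)=+\infty$ whenever $\opdiv\tilde\Dv\neq 0$, i.e. for $\zzz\in\spaceV\setminus\spaceV_0$, so \eqref{eq:varindelta} is trivially satisfied there and carries content only for $\zzz\in\spaceV_0$. Hence one may work throughout on the Hilbert (thus reflexive Banach) space $E:=\spaceV_0$.

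For the inductive step, assume $\ww_{n-1}\in\spaceX_0$ has been constructed and take the increment $\delta:=\Delta\ww_n$ as the unknown. Setting $\tilde A(\delta):=A(\ww_{n-1}+\delta)$, keeping $\phi:=j$, and choosing the convex set $X:=\spaceX_0-\ww_{n-1}=\{\delta\in\spaceV_0:\ww_{n-1}+\delta\in\spaceX_0\}$ (which contains $0$), inequality \eqref{eq:varindelta} is exactly the variational inequality of Theorem~\ref{brezis} with data $(\tilde A,\phi,\ell_n)$. I would then check the hypotheses in turn: $\phi=j$ is convex and lower semicontinuous by \eqref{eq:jclc}, and $\phi(0)=j(0)=0<\infty$ because a proper, nonnegative, positively homogeneous $j$ (see \eqref{eq:jproper}, \eqref{eq:jposhom}) satisfies $j(0)=0$; and $\tilde A$, being a translate of $A=D\Psi$, is the Fréchet derivative of the convex lower semicontinuous $\Psi$, hence monotone and hemicontinuous and therefore weakly pseudo-monotone in the sense required.

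Coercivity \eqref{eq:atoinf} follows from strong monotonicity \eqref{eq:Acont} applied to $\ww_{n-1}+\delta$ and $\ww_{n-1}$ (both in $\spaceX_0$ for $\delta\in X$): it gives $\langle\tilde A(\delta),\delta\rangle\geq m\|\delta\|_{\spaceV}^2-\|A(\ww_{n-1})\|_{\spaceV^*}\|\delta\|_{\spaceV}$, and since $j\geq 0$ the quotient in \eqref{eq:atoinf} is bounded below by $m\|\delta\|_{\spaceV}-\|A(\ww_{n-1})\|_{\spaceV^*}\to\infty$ (the condition being vacuous if the effective domain is bounded, as under saturation). Theorem~\ref{brezis} then yields a solution $\delta\in X$, its strong-monotonicity clause gives uniqueness, and $\ww_n:=\ww_{n-1}+\delta\in\spaceX_0$ with $\Delta\ww_n=\delta\in\spaceV_0$ closes the induction. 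The step I expect to be the genuine obstacle is the requirement in Theorem~\ref{brezis} that $X$ be \emph{closed}: the effective domain of a convex lower semicontinuous $\Psi$ is convex but need not be closed, and under saturation it is typically a bounded open region. I would circumvent this by passing to the equivalent minimization of the extended-real-valued $\delta\mapsto\Psi(\ww_{n-1}+\delta)+j(\delta)-\langle\ell_n,\delta\rangle$ on $\spaceV_0$ — where the direct method uses only lower semicontinuity and the coercivity just established, and whose Euler condition reproduces \eqref{eq:varindelta} through the sum rule for subdifferentials with $A=D\Psi$ — or by verifying closedness directly for the concrete energies of Section~\ref{sec:application}.

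For the stability bound \eqref{eq:firstest} I would use positive homogeneity: as a proper, convex, lower semicontinuous, positively homogeneous functional, $j$ is a support function, so $\ell_n-A(\ww_n)\in\partial j(\Delta\ww_n)$ is equivalent to the pair of relations $\langle\ell_n-A(\ww_n),\Delta\ww_n\rangle=j(\Delta\ww_n)$ and $\langle\ell_n-A(\ww_n),\zzz\rangle\leq j(\zzz)$ for all $\zzz\in\spaceV_0$ (the equality also follows concretely by testing \eqref{eq:varindelta} with $\zzz=0$ and with $\zzz=2\Delta\ww_n$). Writing these at indices $n$ and $n-1$ and testing the inequality at $n-1$ with $\zzz=\Delta\ww_n$, subtraction gives $\langle A(\ww_n)-A(\ww_{n-1}),\Delta\ww_n\rangle\leq\langle\Delta\ell_n,\Delta\ww_n\rangle$; strong monotonicity bounds the left side below by $m\|\Delta\ww_n\|_{\spaceV}^2$ and Cauchy--Schwarz bounds the right side above by $\|\Delta\ell_n\|_{\spaceV^*}\|\Delta\ww_n\|_{\spaceV}$, which after division yields \eqref{eq:firstest}. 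For $n=1$ this invokes the initialization $\ww_0=0$, $\Delta\ww_0=0$ together with $\ell_0=0$, the relations at index $0$ holding since $A(0)=0$ for the physical operator.
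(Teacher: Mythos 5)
Your proposal reproduces the paper's overall architecture --- induction from $\ww_0=\mathbf{0}$, rewriting \eqref{eq:varindelta} in terms of the increment $\Delta \ww_n$, verifying pseudo-monotonicity via convexity of $\Psi$ and coercivity via \eqref{eq:Acont} --- but it resolves the one genuine difficulty, the non-closedness of $X=\spaceX_0-\ww_{n-1}$, by a genuinely different device. The paper stays inside Theorem~\ref{brezis}: it introduces the parameter-dependent set $\spaceX_{0,\uu_{n-1}}(c)$ of \eqref{eq:x0c}, proves its closedness via Brezis's Proposition~6, solves the VI on that set, then uses hemicontinuity of $A$ together with positive homogeneity of $j$ to recover arbitrary test functions, and finally excludes solutions outside the set to preserve uniqueness; this occupies the bulk of the paper's proof. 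You instead minimize the incremental functional $\delta\mapsto\Psi(\ww_{n-1}+\delta)+j(\delta)-\langle\ell_n,\delta\rangle$ by the direct method and recover \eqref{eq:varindelta} from the Euler condition, which is legitimate here precisely because $\Psi$ is convex with $A=D\Psi$ (so VI and minimization are equivalent at points of $\spaceX_0$, and uniqueness comes for free from the strong convexity induced by \eqref{eq:Acont}). Your route is shorter, but be explicit about what it hinges on: the direct method needs the extension of $\Psi$ by $+\infty$ outside $\spaceX$ to be (weakly) lower semicontinuous on $\spaceV_0$, so that the weak limit of a minimizing sequence lands in the effective domain, where $\Psi$ is differentiable and the sum rule applies. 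This is the natural reading of \eqref{eq:Psiconv} and of the effective-domain setup borrowed from \cite{HanReddy:1999}, but it is not literally stated; note it is not a defect relative to the paper, whose own closedness argument silently needs the same boundary control, since the limit $\zzz$ must lie in $\spaceX_0$ for $A(\uu_{n-1}+\zzz)$ in \eqref{eq:x0c} to be defined at all. Your stability derivation is the paper's argument in support-function form: testing with $\zzz=\mathbf{0}$ and $\zzz=2\Delta\ww_n$ at step $n$ gives the equality, and your step-$(n-1)$ inequality evaluated at $\Delta\ww_n$ is exactly what the paper obtains from the test function $\zzz=\Delta\uu_n+\Delta\uu_{n-1}$ and subadditivity of $j$; you also correctly flag the $n=1$ case, which requires a compatibility condition at the initial state (namely $\langle -A(\mathbf{0}),\zzz\rangle\le j(\zzz)$ for all $\zzz\in\spaceV_0$) that the paper passes over in silence.
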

\begin{proof}
	We use Theorem~\ref{brezis} from convex analysis to show existence and uniqueness of the solutions. For the stability estimates, we progress along the lines of proof of \cite{HanReddy:1999} and see that some of their assumptions can be weakened. To show existence and uniqueness of the sequence $\{\uu_n\}_{n=0}^N$, we proceed inductively from $\ww_0 = \mathbf{0}$, assuming $\ww_{n-1}$ to be known. We rewrite the variational inequality \eqref{eq:varindelta} in terms of the unknown $\Delta \uu_n$ and $\uu_{n-1}$,
	\begin{align}
	\langle A(\Delta \uu_n + \uu_{n-1}), \zzz - \Delta \uu_n\rangle +  j(\zzz) - j(\Delta \uu_n)  \geq \langle \ell_n, \zzz - \Delta \uu_n\rangle \qquad \forall\ \zzz \in \spaceV_0. \label{eq:varindelta2}
	\end{align}
	
	We show that we can apply Theorem~\ref{brezis} to obtain existence of a solution $\Delta \uu_n$. As solution space we choose $E = \spaceV_0$. We see that admissible updates are in the set $\spaceX_{0,\uu_{n-1}} := \spaceX_0 - \uu_{n-1} = \{\zzz: \zzz + \uu_{n-1} \in \spaceX_0\}$ where saturation is not reached.
	From \cite[Proposition~1]{Brezis:1968} we know that convexity of $\Psi$ implies monotonicity and hemicontinuity of $A$ on the -- possibly open -- set $\spaceX_{0,\uu_{n-1}}$, which further implies pseudo-monotonicity of $A$. Obviously, strong monotonicity of $A$ and positivity of $j$ imply condition \eqref{eq:atoinf}. But still we cannot use  $\spaceX_{0,\uu_{n-1}}$ directly for $X$ in Theorem~\ref{brezis}, as this set is not necessarily closed. Instead, we use the parameter-dependent closed sub-set
	\begin{align}
	\spaceX_{0,\uu_{n-1}}(c) := \{ \zzz \in \spaceX_0: \langle A(\uu_{n-1} + \zzz), \zzz \rangle - \langle \ell_n, \zzz\rangle \leq c\|\zzz\| \}, \label{eq:x0c}
	\end{align}
	with the choice of $c > 0$ still to be determined.
	We show that this set is closed:  Assume a sequence $\zzz_k \in \spaceX_{0,\uu_{n-1}}(c)$ that converges strongly to some $\zzz \in \spaceV$. To show closedness of $\spaceX_{0,\uu_{n-1}}(c)$ we need to prove that $\zzz \in \spaceX_{0,\uu_{n-1}}(c)$. Starting from the defining condition of \eqref{eq:x0c} applied for $\zzz_k$,
	\begin{align}
	\langle A(\uu_{n-1} + \zzz_k), \zzz_k \rangle - \langle \ell_n, \zzz_k\rangle \leq c\|\zzz_k\|,
	\end{align}
	we apply $\liminf_{k\to \infty}$ on both sides. Proposition~6 in \cite{Brezis:1968} and continuity of $\ell_n$ ensure that
	\begin{align}
	\langle A(\uu_{n-1} + \zzz), \zzz \rangle - \langle \ell_n, \zzz\rangle &\stackrel{\text{\cite{Brezis:1968}} }{ \leq} \liminf_{k\to \infty} \langle A(\uu_{n-1} + \zzz_k), \zzz_k \rangle - \langle \ell_n, \zzz_k\rangle\\
	& \leq c \liminf_{k\to \infty} \|\zzz_k\| = c \|\zzz\|.
	\end{align}
	Thus we have shown $\zzz \in \spaceX_{0,\uu_{n-1}}(c)$ due to \eqref{eq:x0c}, and further that $\spaceX_{0,\uu_{n-1}}(c)$ is closed for any fixed $c > 0$.
	
	The functional $\phi = j$ satisfies the conditions of Theorem~\ref{brezis}.
	With all assumptions of Theorem~\ref{brezis} satisfied, there exists a solution $\Delta \uu_n^c$ to the parameter-dependent variational inequality
	\begin{align}
	\begin{split}
	\langle A(\Delta \uu_n^c + \uu_{n-1}), \zzz - \Delta \uu_n^c\rangle +  j(\zzz) - j(\Delta \uu_n^c)\ \geq\  &\langle \ell_n, \zzz - \Delta \uu_n^c\rangle \\& \forall\ \zzz \in \spaceX_{0,\uu_{n-1}}(c). 
	\end{split} \label{eq:varindelta_c}
	\end{align}
	As $A$ is strongly monotone, the solution is unique .
	
	It remains to be shown that the variational inequality holds also for test functions $\zzz \in \spaceV_0 \backslash \spaceX_{0,\uu_{n-1}}(c)$ provided $c$ is larger than some fixed value not depending on the solution. To preserve uniqueness, we have to show that no $\uu \in \spaceX_0 \backslash\spaceX_{0,\uu_n}(c)$ can be an additional solution. 
	
	For the first task, choose $c_{n-1} = c(\uu_{n-1})$ depending on the previous iterate such that
	\begin{align}
	\langle A(\uu_{n-1}), \zzz\rangle - \langle \ell_n, \zzz\rangle \leq c_{n-1} \|\zzz\| \qquad \forall \zzz \in \spaceV_0.
	\end{align}
	This constant exists since for $\uu_{n-1} \in \spaceX_0$ fixed, the derivative $A(\uu_{n-1})$ is a continuous linear operator. Now set $c = 2 c_{n-1}$, and let $\zzz \in \spaceX_0 \backslash \spaceX_{0,\uu_{n-1}}(2c_{n-1})$, i.e.
	\begin{align}
	\langle A(\uu_{n-1} + \zzz), \zzz \rangle - \langle \ell_n, \zzz\rangle > 2c_{n-1} \|\zzz\|.
	\end{align}
	From the hemicontinuity of $A$ we deduce that there exists some $t_0 > 0$ such that for all $t \in [0,t_0]$
	\begin{align}
	\langle A(\uu_{n-1} + t \zzz), t \zzz\rangle - \langle \ell_n, t \zzz\rangle \leq 2c_{n-1}\, t \|\zzz\| ,
	\end{align}
	i.e. $t \zzz \in \spaceX_{0,\uu_{n-1}}( 2c_{n-1})$. But then the variational inequality \eqref{eq:varindelta_c} is satisfied for $t \zzz$, and due to linearity and positive homogeneity of $j$, it is thus satisfied also for test function $\zzz \in \spaceV$.
	
	For the second task, we have to show that there cannot exist any $\Delta\uu \in \spaceX_0 \backslash\spaceX_{0,\uu_{n-1}}( 2c_{n-1})$ that is also solution to \eqref{eq:varindelta2}. But for such a function $\Delta\uu$ we know
	\begin{align}
	-\langle A(\uu_n + \Delta\uu), \Delta\uu\rangle + \langle \ell_n, \Delta\uu\rangle < -2c_{n-1}\|\uu\|  < 0.
	\end{align}
	Together with the positivity of dissipation, one immediately obtains that the variational inequality \eqref{eq:varindelta2} is not satisfied for $\zzz = 0$, namely it holds
	\begin{align}
	\langle A(\uu_n + \Delta\uu), -\Delta\uu\rangle - j(\Delta\uu) < - \langle \ell_n, \Delta\uu\rangle.
	\end{align}
	Thus $\Delta \uu_n$ is the only solution to \eqref{eq:varindelta2}.
	
	We proceed to the stability estimates \eqref{eq:firstest}. 
	We additionally assume strong monotonicity of $A$ as in \eqref{eq:Acont}. Note that in this case, condition \eqref{eq:atoinf} is trivially satisfied. In \eqref{eq:varindelta2}, we set $\zzz = \mathbf{0}$ to obtain
	\begin{align}
	\langle A(\Delta \uu_n + \uu_{n-1}), \Delta \uu_n\rangle + j(\Delta \uu_n) \leq \langle \ell_{n-1}+\Delta \ell_n, \Delta \uu_n\rangle. \label{eq:varin_z0}
	\end{align}
	Next, in \eqref{eq:varindelta} at time step $n-1$, we use $\zzz = \Delta \uu_n + \Delta \uu_{n-1} \in \spaceV_0$. By algebraic manipulations, using convexity and positive homogeneity of $j(\cdot)$, we see
	\begin{align}
	\langle A(\uu_{n-1}), \Delta \uu_n\rangle + \underbrace{j(\Delta \uu_n + \Delta \uu_{n-1}) - j(\Delta \uu_{n-1})}_{\leq j(\Delta \uu_n)} \geq \langle \ell_{n-1}, \Delta \uu_n\rangle. \label{eq:varinDeltawn1}
	\end{align}
	%	For positively homogeneous $j$ we have that
	%	\begin{align}
	%	j(\Delta \uu_n + \Delta \uu_{n-1}) - j(\Delta \uu_{n-1}) \leq j(\Delta \uu_n).
	%	\end{align}
	Subtracting \eqref{eq:varinDeltawn1} from \eqref{eq:varin_z0} and applying strong monotonicity on the one and continuity of $\Delta \ell_n$ on the other hand we arrive at the desired result \eqref{eq:firstest},
	\begin{align}
	m\|\Delta \uu_n\|_{\spaceV}^2 \leq \langle A(\Delta \uu_n + \uu_{n-1}) - A(\uu_{n-1}), \Delta \uu_n\rangle& \leq \langle \Delta \ell_n, \Delta \uu_n\rangle\\ &\leq \|\Delta \ell_n\|_{\spaceV^*} \|\Delta \uu_n\|_\spaceV.
	\end{align}
	%	Estimates \eqref{eq:secondest} and \eqref{eq:thirdest} follow from \cite[Lemma 7.2]{HanReddy:1999} using only the differentiability of $\ell$.
\end{proof}

\begin{lemma} \label{lemma:estwN}
	Assume that $\ell \in H^1(0,T; \spaceV^*)$ with $\ell(0) = 0$. Then the time-discrete solution $\{\ww_n\}_{n=0}^N$ from Theorem~\ref{theo:discrete} satisfies
	\begin{align}
	\max \|\ww_n\|_{\spaceV} &\leq c \|\dot\ell\|_{L^1(0,T;\spaceV)},\\
	\sum_{n=1}^N \frac{1}{\Delta T} \|\Delta \ww_n\|_\spaceV^2 & \leq c \|\dot\ell\|_{L^2(0,T;\spaceV)}^2. \label{eq:est_deltawn}
	\end{align}
\end{lemma}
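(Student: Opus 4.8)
The plan is to derive both bounds directly from the per-step stability estimate \eqref{eq:firstest} of Theorem~\ref{theo:discrete}, which already controls each increment through $\|\Delta \ww_n\|_\spaceV \le \frac{1}{m}\|\Delta \ell_n\|_{\spaceV^*}$. The only additional ingredient is the link between the discrete load increments and the continuous data: since $\ell \in H^1(0,T;\spaceV^*)$, the fundamental theorem of calculus for Bochner integrals gives $\Delta \ell_n = \ell(t_n) - \ell(t_{n-1}) = \int_{t_{n-1}}^{t_n} \dot\ell(s)\,ds$. Combined with $\ell(0) = \ell_0 = 0$ and the initial value $\ww_0 = \mathbf{0}$ fixed in the proof of Theorem~\ref{theo:discrete}, this is all the input required; everything else is summation over the partition.

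For the first estimate I would telescope. Because $\ww_0 = \mathbf{0}$, one has $\ww_n = \sum_{k=1}^n \Delta \ww_k$, so by the triangle inequality together with \eqref{eq:firstest}, $\|\ww_n\|_\spaceV \le \sum_{k=1}^n \|\Delta \ww_k\|_\spaceV \le \frac{1}{m}\sum_{k=1}^N \|\Delta \ell_k\|_{\spaceV^*}$, a bound uniform in $n$. Estimating each load increment by Minkowski's inequality for the Bochner integral, $\|\Delta \ell_k\|_{\spaceV^*} \le \int_{t_{k-1}}^{t_k}\|\dot\ell(s)\|_{\spaceV^*}\,ds$, and summing over the disjoint subintervals collapses the right-hand side to $\int_0^T \|\dot\ell(s)\|_{\spaceV^*}\,ds = \|\dot\ell\|_{L^1(0,T;\spaceV^*)}$. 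Taking the maximum over $n$ then yields the first claim with $c = 1/m$.

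For the second estimate the only extra device is Cauchy--Schwarz, used precisely to produce the factor $\Delta T$ that cancels the prefactor $1/\Delta T$. Starting again from \eqref{eq:firstest}, $\sum_{n=1}^N \frac{1}{\Delta T}\|\Delta \ww_n\|_\spaceV^2 \le \frac{1}{m^2}\sum_{n=1}^N \frac{1}{\Delta T}\|\Delta \ell_n\|_{\spaceV^*}^2$. Applying Cauchy--Schwarz to $\Delta \ell_n = \int_{t_{n-1}}^{t_n}\dot\ell(s)\,ds$ over a subinterval of length $\Delta T$ gives $\|\Delta \ell_n\|_{\spaceV^*}^2 \le \Delta T \int_{t_{n-1}}^{t_n}\|\dot\ell(s)\|_{\spaceV^*}^2\,ds$, so the $\Delta T$ factors cancel and the sum over subintervals telescopes to $\int_0^T\|\dot\ell(s)\|_{\spaceV^*}^2\,ds = \|\dot\ell\|_{L^2(0,T;\spaceV^*)}^2$. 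This delivers \eqref{eq:est_deltawn} with $c = 1/m^2$.

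The argument is essentially mechanical once \eqref{eq:firstest} is available, and I do not expect a genuine obstacle. The two points that deserve minor care are the passage $\Delta \ell_n = \int_{t_{n-1}}^{t_n}\dot\ell\,ds$, which relies on the $H^1$-in-time regularity of $\ell$ and on $\ell(0)=0$, and the Cauchy--Schwarz step in the second estimate: matching the power of $\Delta T$ there is exactly what makes the discrete sum converge to the continuous $L^2(0,T;\spaceV^*)$ norm with a constant \emph{independent} of $N$, which is the whole point of the bound. (Throughout I write the $\spaceV^*$ norm for the load quantities, consistently with \eqref{eq:firstest}, since $\dot\ell$ is $\spaceV^*$-valued.)
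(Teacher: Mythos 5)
Your proof is correct and is essentially the argument the paper delegates to its citation of Lemma~7.2 in Han and Reddy \cite{HanReddy:1999}: the per-step bound \eqref{eq:firstest}, telescoping from $\ww_0=\mathbf{0}$, and Cauchy--Schwarz applied to $\Delta\ell_n=\int_{t_{n-1}}^{t_n}\dot\ell(s)\,ds$ so that the factor $\Delta T$ cancels the prefactor $1/\Delta T$. Your closing observation is also correct: the right-hand sides in the lemma should carry the $\spaceV^*$ norm of $\dot\ell$ (as in \eqref{eq:firstest}), not the $\spaceV$ norm as written in the statement.
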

\begin{proof}
	Follows directly from \cite[Lemma~7.2]{HanReddy:1999}.
\end{proof}
%\cite{CorreaJofreThibault:1994}, We prove that a lower semicontinuous function defined on a Banach space is convex if and only if its subdifferential is monotone. 

\section{Existence of a time-dependent solution} \label{sec:timedep}

We proceed to finding a time-dependent solution from series of time-discrete solutions. A similar approach can be found in the framework of elasto-plasticity \cite{HanReddy:1999}. We note that this approach is intrinsically different from the procedure used by Sofonea and Matei \cite{SofoneaMatei:2011} in contact mechanics. This approach \cite{SofoneaMatei:2011} uses viscosity to ensure existence and uniqueness, which is not present in our problem. 

We generate a time-dependent solution $\ww^N(t)$ from the series $\{\ww_n\}_{n=0}^N$ from the previous section. We do so by linear interpolation in time,
\begin{align} \label{eq:wN}
\ww^N(t) & := \ww_{n-1} + \frac{t - t_{n-1}}{\Delta T} \Delta \ww_n & &\text{for } t \in [t_{n-1}, t_n].
\end{align}
As, for any time $t$, $\ww^N(t)$ is a convex combination of $\ww_n \in \spaceX_0$, we observe that $\ww^N(t) \in \spaceX_0$.
The following lemma proves that this interpolated solution satisfies a modified time-dependent variational inequality for a certain kind of step functions $\zzz^N$.

\begin{lemma} \label{lemma:modifiedVI}
	Let all the assumptions from Theorem~\ref{theo:discrete} be satisfied, and
	let $\ww^{N}$ be defined from the series of solutions as in \eqref{eq:wN}. Let $\ell^N$ denote the corresponding piecewise linear time interpolant of the time-dependent right hand side $\ell \in H^1(0,T; \spaceV^*)$.
	For any sequence $\{\zzz_n\}_{n=1}^N$ in $\spaceV_0$, let $\zzz^N(t)$ be defined piecewise by
	\begin{align} \label{eq:zN}
	\zzz^N(t) & := \zzz_n & &\text{for } t \in [t_{n-1}, t_n].
	\end{align}
	Let $A: \spaceX_0 \to \spaceV^*$ be additionally Lipschitz continuous as in \eqref{eq:ALipschitz}. Then, there exists some constant $c>0$ such that the following inequality is satisfied,
	\begin{align}
	\int_0^T \Big( \langle A(\ww^N), \zzz^N - \dot \ww^N\rangle + &j(\zzz^N) - j(\dot \ww^N) - \langle \ell^N, \zzz^N - \dot \ww^N\rangle \Big) \geq \\
	&\geq
	-c \Delta T ( \|\dot \ell\|_{L^2(0,T;\spaceV)} \|\zzz^N\|_{L^2(0,T;\spaceV)} +  \|\dot \ell\|_{L^2(0,T;\spaceV)} ^2 ) .%+ \frac{1}{2} \Delta T j(\zzz_1).
	\end{align}
	
\end{lemma}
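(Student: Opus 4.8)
The plan is to integrate the asserted inequality subinterval by subinterval and, on each $[t_{n-1},t_n]$, to compare the contribution against the time-discrete inequality \eqref{eq:varindelta} at step $n$, exploiting that all discrete quantities are piecewise affine or constant. First I would record the elementary identities valid on $[t_{n-1},t_n]$: $\dot\ww^N(t)=\Delta\ww_n/\Delta T$, $\zzz^N(t)=\zzz_n$, and, by positive homogeneity \eqref{eq:jposhom}, $j(\dot\ww^N)=\frac{1}{\Delta T}j(\Delta\ww_n)$, so that the $j$-part of the integral over one subinterval equals $\Delta T\,j(\zzz_n)-j(\Delta\ww_n)$. I would also note $\ww^N(t)-\ww_n=-\frac{t_n-t}{\Delta T}\Delta\ww_n$ and, analogously, $\ell^N(t)-\ell_n=-\frac{t_n-t}{\Delta T}\Delta\ell_n$, so that the scalar weight satisfies $\int_{t_{n-1}}^{t_n}\frac{t_n-t}{\Delta T}\,dt=\frac{\Delta T}{2}$; this factor is what carries the $\Delta T$-smallness.

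The key algebraic step is to use \eqref{eq:varindelta} at step $n$ with the admissible test function $\zzz=\Delta T\,\zzz_n\in\spaceV_0$ (the discrete inequality may be restricted to $\spaceV_0$, and $\spaceV_0$ is linear). Multiplying out and using $j(\Delta T\,\zzz_n)=\Delta T\,j(\zzz_n)$ produces a nonnegative quantity whose $j$-terms coincide exactly with those produced by the integral. Writing the single-interval contribution $I_n$ as this nonnegative discrete quantity plus a remainder $R_n$, the $j$-terms cancel, and $R_n$ contains only the consistency errors $\langle A(\ww^N(t))-A(\ww_n),\,\cdot\,\rangle$ and $\langle\ell^N(t)-\ell_n,\,\cdot\,\rangle$ paired with $\zzz_n$ and with $\Delta\ww_n/\Delta T$. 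Hence $I_n\geq R_n\geq-|R_n|$, and summing over $n$ reduces the lemma to bounding $\sum_n|R_n|$ from above.

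For the remainder I would invoke Lipschitz continuity \eqref{eq:ALipschitz} on the $A$-differences and the dual-norm pairing on the $\ell$-differences, together with $\|\ww^N(t)-\ww_n\|_\spaceV=\frac{t_n-t}{\Delta T}\|\Delta\ww_n\|_\spaceV$ and the corresponding identity for $\ell$. Each of the four remainder pieces then reduces, up to fixed constants, to one of $\Delta T\,\|\zzz_n\|_\spaceV\|\Delta\ww_n\|_\spaceV$, $\|\Delta\ww_n\|_\spaceV^2$, $\Delta T\,\|\Delta\ell_n\|_{\spaceV^*}\|\zzz_n\|_\spaceV$, or $\|\Delta\ell_n\|_{\spaceV^*}\|\Delta\ww_n\|_\spaceV$. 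Summing over $n$ and applying the discrete Cauchy--Schwarz inequality, I would use $\sum_n\Delta T\,\|\zzz_n\|_\spaceV^2=\|\zzz^N\|_{L^2(0,T;\spaceV)}^2$, the stability bound \eqref{eq:firstest} in the form $\|\Delta\ww_n\|_\spaceV\leq\frac1m\|\Delta\ell_n\|_{\spaceV^*}$, the estimate $\|\Delta\ell_n\|_{\spaceV^*}\leq(\Delta T)^{1/2}\|\dot\ell\|_{L^2(t_{n-1},t_n;\spaceV^*)}$ coming from $\Delta\ell_n=\int_{t_{n-1}}^{t_n}\dot\ell\,dt$, and the aggregate bound \eqref{eq:est_deltawn}, $\sum_n\frac{1}{\Delta T}\|\Delta\ww_n\|_\spaceV^2\leq c\|\dot\ell\|_{L^2(0,T;\spaceV)}^2$, from Lemma~\ref{lemma:estwN}. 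Two of the pieces then collapse to $c\,\Delta T\,\|\dot\ell\|_{L^2(0,T;\spaceV)}\|\zzz^N\|_{L^2(0,T;\spaceV)}$ and the other two to $c\,\Delta T\,\|\dot\ell\|_{L^2(0,T;\spaceV)}^2$, which is exactly the asserted right-hand side.

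I expect the main obstacle to be the bookkeeping of the summation rather than any single estimate: one must apply the two distinct Cauchy--Schwarz steps — continuous in time inside each $\|\dot\ell\|_{L^2(t_{n-1},t_n;\spaceV^*)}$, and discrete across the index $n$ — in the correct order so that precisely one factor of $\Delta T$ survives and the norms reassemble into the global $\|\dot\ell\|_{L^2(0,T;\spaceV)}$ and $\|\zzz^N\|_{L^2(0,T;\spaceV)}$ rather than into $N$-dependent or weaker quantities. Everything else — the positive homogeneity of $j$, the nonnegativity extracted from \eqref{eq:varindelta}, and the Lipschitz bound on $A$ — is routine once the subinterval identities above are in place.
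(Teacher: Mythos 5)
Your proposal is correct, and it rests on the same underlying mechanism as the paper's proof (test the discrete inequality \eqref{eq:varindelta} with scaled elements of $\spaceV_0$, exploit positive homogeneity \eqref{eq:jposhom}, and control the sum-versus-integral discrepancy by the Lipschitz bound \eqref{eq:ALipschitz} together with the stability estimates \eqref{eq:firstest} and \eqref{eq:est_deltawn}); however, your decomposition is genuinely different and more self-contained. The paper tests step $n$ with the \emph{averaged} function $\tfrac{\Delta T}{2}(\zzz_n+\zzz_{n+1})$, $\zzz_{N+1}=0$: after an Abel-type reindexing, this pairs $\ell_n$ with $\zzz_n$ in a trapezoid-consistent way so that the $\ell$-term needs no separate consistency estimate (it is absorbed into estimates cited from \cite{HanReddy:1999}), at the price of a reindexing step, a boundary term $-\tfrac{\Delta T}{2}j(\zzz_1)$ discarded by nonnegativity of $j$, and reliance on three imported inequalities; explicit Lipschitz estimates are then only carried out for the two $A$-sums. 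You instead test with the unaveraged $\Delta T\,\zzz_n$, which makes the $j$-terms of the discrete inequality and of the integral coincide exactly (no reindexing, no boundary term), organize the argument interval by interval with a local remainder $R_n$, and pay with one extra consistency term $\langle \ell^N(t)-\ell_n,\cdot\rangle$, which you bound correctly via $\Delta\ell_n=\int_{t_{n-1}}^{t_n}\dot\ell\,dt$ and the two-level (continuous-then-discrete) Cauchy--Schwarz argument; your four remainder types, $\Delta T\|\Delta\ww_n\|_\spaceV\|\zzz_n\|_\spaceV$, $\|\Delta\ww_n\|_\spaceV^2$, $\Delta T\|\Delta\ell_n\|_{\spaceV^*}\|\zzz_n\|_\spaceV$ and $\|\Delta\ell_n\|_{\spaceV^*}\|\Delta\ww_n\|_\spaceV$, indeed reassemble into exactly the two terms on the asserted right-hand side. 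In short, the paper's route buys an exact treatment of the $\ell$-pairing by leaning on the literature, while your route buys a cleaner local structure and a proof that does not depend on the estimates quoted from \cite{HanReddy:1999}; both are rigorous and yield the same $O(\Delta T)$ bound.
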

\begin{proof}
	For each time step $1\leq n \leq N$, the variational inequality \eqref{eq:varindelta} is satisfied, if we choose test function $\zzz = \Delta T/2(\zzz_n + \zzz_{n+1})$ with $\zzz_{N+1} = 0$, and use the positive homogeneity of $j$,
	\begin{align}
	\left\langle A(\ww_n), \Delta T\frac{\zzz_n + \zzz_{n+1}}{2} - \Delta \ww_{n}\right\rangle + &\frac{\Delta T}{2} j(\zzz_n + \zzz_{n+1}) - j(\Delta \ww_n) \geq \nonumber\\
	&\geq \left\langle \ell_n, \Delta T\frac{\zzz_n + \zzz_{n+1}}{2} - \Delta \ww_n\right\rangle. \label{eq:varindelta_n}
	\end{align}
	Summing over $n$ leads to
	\begin{align}  \label{eq:firstsum}
	\sum_{n=1}^N & \Delta T\left(\left\langle A(\ww_n), \frac{\zzz_n + \zzz_{n+1}}{2} - \frac{\Delta \ww_{n}}{\Delta T}\right\rangle + \frac{\Delta T}{2} j(\zzz_n + \zzz_{n+1}) - j(\Delta \ww_n)\right)\\
	& \geq  \sum_{n=1}^N \Delta T\left\langle \ell_n, \frac{\zzz_n + \zzz_{n+1}}{2} - \frac{\Delta \ww_n}{\Delta T}\right\rangle.
	\end{align}
	In \cite{HanReddy:1999} the following estimates have been shown,
	\begin{align}
	\sum_{n=1}^N \frac{\Delta T}{2} j(\zzz_n + \zzz_{n+1}) &\leq \int_0^T j(\zzz^N(s)) ds - \frac{\Delta T}{2} j(\zzz_1),\\
	\sum_{n=1}^N  j(\Delta \ww_n) &= \int_0^T j(\dot \ww^N(s)) ds,\\
	\sum_{n=1}^N \Delta T\left\langle \ell_n, \frac{\zzz_n + \zzz_{n+1}}{2} - \frac{\Delta \ww_n}{\Delta T}\right\rangle & \geq
	\int_0^T \langle \ell^N(s), \zzz^N(s) - \dot \ww^N(s)\rangle ds + c \Delta T \int_0^T \|\dot \ell(s)\|_{\spaceV^*}^2\, ds.
	\end{align}
	Using the Lipschitz continuity of $A$, we see that the first part of the first sum in \eqref{eq:firstsum} is close to a corresponding integral. We estimate
	\begin{align}
	&\left| \sum_{n=1}^N  \Delta T \left\langle A(\ww_n), \frac{\zzz_n + \zzz_{n+1}}{2} \right\rangle - \int_0^T \langle A(\ww^N(s)), \zzz^N(s)\rangle\, ds \right| \\
	&=\left| \sum_{n=1}^N \left(\! \frac{\Delta T}{2} \left\langle A(\ww_n)+ A(\ww_{n-1}), \zzz_n \right\rangle - \int_0^{\Delta T}\!\!\!\!\!\! \langle A(\ww_{n-1}+s\,\Delta\ww_n/\Delta T), \zzz_n\rangle\, ds\right) \right|\\
	&= \frac12 \left| \sum_{n=1}^N \int_0^{\Delta T}   \left\langle A(\ww_n)+A(\ww_{n-1}) - 2A(\ww_{n-1}+s\,\Delta\ww_n/\Delta T) , \zzz_n \right\rangle \, ds \right|\\
	&\leq  \frac12\sum_{n=1}^N \int_0^{\Delta T} 2c_A \frac{s}{\Delta T} \|\Delta \ww_n\|_\spaceV \|\zzz_n\|_\spaceV \, ds\\
	&=  \frac12\sum_{n=1}^N c_A \Delta T \|\Delta \ww_n\|_\spaceV \|\zzz_n\|_\spaceV.
	\end{align}
	Using Cauchy-Schwarz inequality and estimate \eqref{eq:est_deltawn}, we further derive
	\begin{align}
	\left| \sum_{n=1}^N  \Delta T \left\langle A(\ww_n), \frac{\zzz_n + \zzz_{n+1}}{2} \right\rangle - \int_0^T \langle A(\ww^N(s)), \zzz^N(s)\rangle\, ds \right|&\\
	\leq c \Delta T \|\dot \ell\|_{L^2(0,T; \spaceV)} \|\zzz^N\|_{L^2(0,T;\spaceV)}&.
	\end{align}
	By similar conclusions we estimate the difference between the second part of the first sum in \eqref{eq:firstsum} and the corresponding integral,
	\begin{align}
	&\left| \sum_{n=1}^N  \left\langle A(\ww_n),\Delta \ww_n \right\rangle - \int_0^T \langle A(\ww^N(s)), \dot \ww^N(s)\rangle\, ds \right|\\
	&= \left| \sum_{n=1}^N  \left( \left\langle A(\ww_{n-1}+\Delta \ww_n),\Delta \ww_n \right\rangle - \int_0^{\Delta T} \left\langle A(\ww_{n-1} + s \frac{\Delta \ww_n}{\Delta T}), \frac{\Delta \ww_n}{\Delta T} \right\rangle\, ds \right) \right|\\
	&=  \left| \sum_{n=1}^N \frac{1}{\Delta T} \int_0^{\Delta T} \left\langle A(\ww_{n-1}+\Delta \ww_n) - A(\ww_{n-1} + s \frac{\Delta \ww_n}{\Delta T}), \Delta \ww_n \right\rangle \, ds  \right|\\
	&\leq \sum_{n=1}^N \frac{c_A}{\Delta T}\int_0^{\Delta T} \left(1-\frac{s}{\Delta T}\right)\, ds\, \|\Delta \ww_n\|_{\spaceV}^2\\
	&= \frac{c_A}{2}\sum_{n=1}^N\|\Delta \ww_n\|_{\spaceV}^2 \ \leq c \Delta T \|\dot \ell\|_{L^2(0,T; \spaceV)}^2.
	\end{align}
	Putting these estimates together, and observing the positivity of $j(\zzz_1)$, we arrive at the desired inequality.
\end{proof}

Next, we provide a candidate for the time-dependent solution of the original variational inequality. We observe that $\ww^N$ is bounded in the sense that, for any $N$,
\begin{align}
\|\ww^N\|_{L^\infty(0,T; \spaceV)} &\leq c, & \|\dot \ww^N\|_{L^2(0,T; \spaceV)} &\leq c_A.
\end{align} 
This follows directly from Lemma~\ref{lemma:estwN}.  Additionally, from the Lipschitz continuity of $A$ we know that
\begin{align}\label{eq:convAwN}
\|A(\ww^N)\|_{L^\infty(0,T; \spaceV)} &\leq c.
\end{align} 
Now, consider a fixed step size $N_0$ and the according sequence of step sizes $N_l = 2^{-l} N_0$. To this sequence, there exists a weakly convergent subsequence, without loss of generality again denoted by $N$ such that
\begin{align}
\ww^N & \weakstarconv \ww \text{ in } L^\infty(0,T;\spaceV), & \dot \ww^N & \weakconv \dot \ww \text{ in } L^2(0,T;\spaceV), & A(\ww^N) & \weakconv A(\ww) \text{ in } L^2(0,T;\spaceV^*).
\end{align}
Applying $\limsup$ and $\liminf$ to the left and right hand side of the variational inequality from Lemma~\ref{lemma:modifiedVI}, we know that (with $\Delta T = T/N$),
\begin{align}
\limsup_{N \to \infty} \int_0^T &\Big( \langle A(\ww^N), \zzz^{N_0} - \dot \ww^N\rangle + j(\zzz^{N_0}) - j(\dot \ww^N) - \langle \ell^N, \zzz^{N_0} - \dot \ww^N\rangle \Big) \geq \\
&\geq
\liminf_{N \to \infty} \Big( -c \Delta T ( \|\dot \ell\|_{L^2(0,T;\spaceV)} \|\zzz^{N_0}\|_{L^2(0,T;\spaceV)} +  \|\dot \ell\|_{L^2(0,T;\spaceV)} ^2 )  \Big).
\end{align}
Obviously, the limit of the right hand side is zero, as $\Delta T = T/N \to 0$. In \cite{HanReddy:1999} it is shown that
\begin{align}
&\limsup_{N \to \infty} \int_0^T- j(\dot \ww^N)dt  = -\liminf_{N \to \infty} \int_0^T j(\dot \ww^N)dt  \leq \int_0^T j(\dot \ww)dt,\\
&\lim_{N \to \infty} \int_0^T \langle \ell^N, \zzz^{N_0} - \dot \ww^N\rangle dt = \int_0^T \langle \ell, \zzz^{N_0} - \dot \ww\rangle dt 
\end{align}
Due to the weak convergence of $A(\ww^N)$ \eqref{eq:convAwN}, we see
\begin{align}
\limsup_{N \to \infty} \int_0^T \langle A(\ww^N), \zzz^{N_0}\rangle dt = \int_0^T \langle A(\ww), \zzz^{N_0}\rangle dt.
\end{align}
Last, we observe from the chain rule of differentiation, the weak lower semicontinuity of the energy $\Psi$ and the weak-star convergence of $\ww^N$ in $L^\infty(0,T; \spaceV)$ that
\begin{align}
\limsup_{N \to \infty} \int_0^T -\langle A(\ww^N), \dot \ww^N\rangle dt &= -\liminf_{N \to \infty} \int_0^T \frac{d}{dt} \Psi(\ww^N)\, dt \\
&= -\liminf_{N \to \infty} \Psi(\ww^N(T))\\ & \leq  -\Psi(\ww(T)) = - \int_0^T \langle A(\ww), \dot \ww\rangle dt.
\end{align}
Collecting these results, we find that $\ww$ satisfies the following variational inequality
\begin{align}
\int_0^T &\Big( \langle A(\ww), \zzz^{N_0} - \dot \ww\rangle + j(\zzz^{N_0}) - j(\dot \ww) - \langle \ell, \zzz^{N_0} - \dot \ww\rangle \Big) \geq 
0.
\end{align}
Approximating $\zzz \in L(0,T;\spaceV_0)$ by step functions and using a localization argument in time, the weak limit $\ww$ can be shown to satisfy the time-dependent variational inequality \eqref{eq:varin_time_j}. For details on this procedure, we refer to \cite[p. 165]{HanReddy:1999}. Thereby, we arrive at the desired result:

\begin{theorem}
	There exists a solution $\ww \in H^1(0,T; \spaceV)$ solving the time-dependent variational inequality
	\begin{align}
	\langle A(\ww), \zzz - \dot \ww \rangle +  j(\zzz) - j(\dot \ww) &\geq \langle \ell, \zzz - \dot \ww \rangle &&\text{for all } \zzz \in \spaceV. 
	\end{align}
\end{theorem}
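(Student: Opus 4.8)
The plan is to use the Rothe (time-discretization) method already set up in this section: solve the time-discrete update problem on a uniform grid, interpolate in time, extract a weak limit of the discrete solutions, and recover the variational inequality in the limit. Concretely, for each fixed $N$ Theorem~\ref{theo:discrete} produces a unique sequence $\{\ww_n\}_{n=0}^N \subset \spaceX_0$ with $\Delta \ww_n \in \spaceV_0$ solving \eqref{eq:varindelta}, and Lemma~\ref{lemma:estwN} supplies the $N$-uniform bounds on $\max_n \|\ww_n\|_\spaceV$ and on $\sum_n (\Delta T)^{-1}\|\Delta \ww_n\|_\spaceV^2$. From these I would form the piecewise-linear interpolant $\ww^N$ of \eqref{eq:wN}, which lies in $\spaceX_0$ for every $t$ because $\spaceX_0$ is convex (being the effective domain of the convex energy $\Psi$ intersected with the subspace $\spaceV_0$), so that $\ww^N(t)$, a convex combination of admissible iterates, stays admissible. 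The uniform bounds then read $\|\ww^N\|_{L^\infty(0,T;\spaceV)} \leq c$ and $\|\dot \ww^N\|_{L^2(0,T;\spaceV)} \leq c$, and the Lipschitz bound \eqref{eq:ALipschitz} gives in addition $\|A(\ww^N)\|_{L^\infty(0,T;\spaceV^*)} \leq c$.

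Next I would invoke Lemma~\ref{lemma:modifiedVI}, which shows that $\ww^N$ satisfies the time-integrated variational inequality tested against step functions $\zzz^N$ up to a defect of order $\Delta T$. Using the reflexivity (Hilbert structure) of $L^2(0,T;\spaceV)$ and weak-star compactness in $L^\infty(0,T;\spaceV)$, the uniform bounds yield a subsequence with $\ww^N \weakstarconv \ww$ in $L^\infty(0,T;\spaceV)$, $\dot \ww^N \weakconv \dot \ww$ in $L^2(0,T;\spaceV)$, and $A(\ww^N) \weakconv A(\ww)$ in $L^2(0,T;\spaceV^*)$; the identification of the last weak limit with $A(\ww)$ rests on the monotonicity and hemicontinuity of $A$ (a Minty-type argument). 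Freezing the test function at a coarse level $\zzz^{N_0}$ and letting $N \to \infty$, the right-hand defect vanishes since $\Delta T = T/N \to 0$. The linear $\ell$-term passes by strong–weak convergence, the $-j(\dot \ww^N)$ term is controlled from above by the weak lower semicontinuity of $j$ from \eqref{eq:jclc}, and $\langle A(\ww^N), \zzz^{N_0}\rangle$ passes directly by the weak convergence of $A(\ww^N)$.

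The main obstacle is the genuinely nonlinear term $-\langle A(\ww^N), \dot \ww^N\rangle$: it is a product of two merely weakly convergent sequences, so no term-by-term passage is available (there is no compactness to upgrade to strong convergence, since the $\spaceS$ and $\spaceP$ components carry only $L^2$ structure). This is precisely where the variational structure $A = D\Psi$ must be exploited. Since $\ww^N$ is piecewise linear, hence absolutely continuous in time, and $\Psi$ is Frechet differentiable with derivative $A$ by \eqref{eq:Psiconv}, the chain rule gives $\langle A(\ww^N), \dot \ww^N \rangle = \tfrac{d}{dt}\Psi(\ww^N)$ and therefore $\int_0^T \langle A(\ww^N), \dot \ww^N\rangle\,dt = \Psi(\ww^N(T)) - \Psi(0)$, the boundary term at $0$ being constant because $\ww_0 = \mathbf{0}$. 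The weak(-star) convergence of $\ww^N$ propagates to $\ww^N(T) \weakconv \ww(T)$ by a standard trace-in-time argument, and the weak lower semicontinuity of the convex energy $\Psi$ then gives $\liminf_N \Psi(\ww^N(T)) \geq \Psi(\ww(T))$, whence $\limsup_N \bigl(-\int_0^T \langle A(\ww^N),\dot \ww^N\rangle\,dt\bigr) \leq -\int_0^T \langle A(\ww),\dot \ww\rangle\,dt$. Combining this with the three easy limits yields the integrated inequality tested against $\zzz^{N_0}$.

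Finally I would remove the step-function restriction on the test function. Approximating an arbitrary $\zzz \in L^2(0,T;\spaceV_0)$ by step functions, using the continuity \eqref{eq:jcont} of $j$ on $\spaceV_0$ to control the dissipation term under this approximation, and applying a Lebesgue-point localization in time, I deduce the pointwise-in-time inequality for all $\zzz \in \spaceV$, which is the asserted variational inequality; the regularity $\ww \in H^1(0,T;\spaceV)$ comes from the uniform $L^2$ bound on $\dot \ww^N$. The delicate points to verify carefully are the convexity of the effective domain $\spaceX_0$ (ensuring the interpolant remains admissible) and the Minty identification $A(\ww^N) \weakconv A(\ww)$, but the one genuinely nonstandard step is the energy-based treatment of $\langle A(\ww^N),\dot \ww^N\rangle$ via the chain rule and the weak lower semicontinuity of $\Psi$ described above, which is exactly why the stronger hypotheses of this section (Lipschitz continuity of $A$ and the full differentiable, convex energy structure) are needed.
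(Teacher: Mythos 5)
Your proposal follows essentially the same route as the paper's own proof: Rothe time-discretization via Theorem~\ref{theo:discrete} and Lemma~\ref{lemma:estwN}, the piecewise-linear interpolant and the defect inequality of Lemma~\ref{lemma:modifiedVI}, weak/weak-star limit extraction, handling of the critical term $\langle A(\ww^N),\dot\ww^N\rangle$ by the chain rule $\tfrac{d}{dt}\Psi(\ww^N)$ together with weak lower semicontinuity of $\Psi$, and finally step-function approximation with localization in time plus the $H^1$ regularity conclusion. The only (welcome) difference is that you explicitly flag the identification $A(\ww^N)\weakconv A(\ww)$ as requiring a Minty-type argument, a point the paper asserts without comment.
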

\begin{proof}
	We have seen that $\ww$ above satisfies the variational inequality, and that $\ww \in L^\infty(0,T; \spaceV)$ and $\dot \ww \in L^2(0,T; \spaceV)$. From the Sobolev embedding theorem we deduce that then $\ww \in H^1(0,T; \spaceV)$.
\end{proof}

\section{Application to different ferroelectric material models} \label{sec:application}

In the following, we motivate in how far the assumptions of the previous sections are reasonable, and if they hold for some standard material models.  We do so first for a simple model without saturation, and for the ferroelectric material model for non-remanent straining as proposed by Landis \cite{Landis:2002}.  We derive the form of the dissipation function $\Phi(\dot \Pv^{i})$ for a given switching surface depending on $\hat \Ev$.
We will see that the assumptions for existence of an update solution (Theorem~\ref{theo:discrete}) are met in both cases if material parameters are in common ranges, but that Lipschitz continuity lacks for the latter model, and thereby convergence in time is not guaranteed by our deductions. 

In all cases, we assume the reversible part of the energy as proposed by \cite{Landis:2002},
\begin{align}
\begin{split}
\Psi^r := \int_\Omega \Big(\frac12 (\St - \St^{i}) : \cten : (\St - \St^{i}) - &(\St - \St^{i}) : \hten \cdot (\Dv - \Pv^{i})\\
& + \frac12 (\Dv - \Pv^{i}) \cdot \betat \cdot (\Dv - \Pv^{i}) \Big) \,dx .
\end{split} \label{eq:PsiR}
\end{align}
In the theoretical considerations below, we assume the case of non-remanent straining, i.e.\ $\St^{i} = 0$  and $\cten, \hten$ and $\betat$ depend on the remanent polarization.
Quite similarly, one might introduce a kinematic assumption for the remanent strain $\St^{i} = \St^{i}(\Pv^{i})$, as done in \cite{McMeekingLandis:2002,MieheRosatoKiefer:2011}. We consider this case in our numerical examples. The different material models will be characterized via different irreversible energies.

We assume that the material constants are such that the compound material tensor has positive eigenvalues bounded away from zero independently of $\Pv^{i}$. This issue is treated in detail by Stark et al. \cite{StarkNeumeisterBalke:2016a}, they give suitable conditions. However, they claim that these conditions are not met by all data-sets provided for commercially available ferroelectric materials.
Considering the special form of $\Psi^{r}$, in case these conditions are met, the derivative $A^{r} := \partial \Psi^{r}/\partial \uu$ is not strictly monotone, but satisfies
\begin{align} \label{eq:Arstrictlymonotone}
\langle A^r(\uu) - A^{r}(\zzz), \uu - \zzz \rangle \geq
c_1 (\|\St - \tilde \St\|_{L^2}^2 + \|\Dv - \Pv^{i} - \tilde \Dv + \tilde \Pv^{i}\|_{L^2}^2)
\end{align}

Before we turn to the different material models, we first characterize the dissipation function. Usually, it is defined by a threshold or switching surface, i.e. a condition on the  forces $\hat \Ev$ and $\hat \Tt$. We assume a condition of the form 
\begin{align}
\varphi(\hat \Ev) = |\hat \Ev|/E_0 - 1 \leq 0.
\end{align}
Then the dissipation function $\Phi$ is given by\textit{}
\begin{align}
\Phi(\dot \Pv^{i}) = \sup \Big\{ \int_\Omega \dot \Pv \cdot \hat \Ev\, dx:\ \hat \Ev \in [L^2]^3, \varphi(\hat \Ev) \leq 0\Big\} = \int_\Omega E_0 |\dot \Pv^{i}|\, dx.
\end{align}
In \cite{HanReddy:1999} it is shown that dissipation functions of the above format satisfy all assumptions such as convexity, lower semicontinuity and positive homogeneity. Indeed, in case of elasto-plasticity the dissipation function is of the same abstract form, where the coercive electric field $E_0$ resembles the yield stress.

\subsection{A simple ferroelectric material model} \label{sec:simplemodel}

A very simple material model in the spirit of Kamlah \cite{Kamlah:2001} assumes a quadratic dependence of the irreversible energy on the remanent polarization $\Pv^{i}$ via $\Psi^{i} = \int_\Omega H_0 \Pv^{i} \cdot \Pv^{i}\, dx$. Saturation is not included in this model. In this case, the solution space is the whole space, $\spaceX = \spaceV$.
As a quadratic form, obviously $\Psi^{i}$ is convex and Lipschitz continuous on $\spaceV$, and also
\begin{align} \label{eq:Aistrictlymonotone}
\langle A^i(\uu) - A^{i}(\zzz), \uu - \zzz \rangle = H_0 \|\Pv^{i} - \tilde \Pv^{i}\|_{L^2}^2
\end{align}
in the notation of the previous sections.

We can now prove strict monotonicity of $A = A^{r} + A^{i}$. One immediately deduces by \eqref{eq:Arstrictlymonotone} and \eqref{eq:Aistrictlymonotone} that
\begin{align} \label{eq:monotonicity}
\langle A(\uu) - A(\zzz), \uu - \zzz \rangle \geq \frac12 \min(c_1,H_0)(\|\St - \tilde \St\|_{L^2}^2 + \|\Dv - \tilde \Dv\|_{L^2}^2 + \|\Pv^{i} - \tilde \Pv^{i}\|_{L^2}^2).
\end{align}
The last, essential ingredient to strict monotonicity is the fact that we restricted the full space of all dielectric displacements $\spaceD$ to those which are  divergence-free, $\opdiv \Dv = 0$, and that this space $\spaceD_0$ is closed. In this case, the $L^2$ norm is equivalent to the full $H(\opdiv)$ norm on $\spaceD_0$, and \eqref{eq:monotonicity} is sufficient for strict monotonicity of $A$ on $\spaceV_0$.

\subsection{A saturating ferroelectric material model}

For the second material model, the irreversible part of the energy is assumed such that its derivative reads %as %\cite[Section~5]{Landis:2002}
%\begin{align}
%\Psi^{i} &= \frac{{H_0 P_0^2}}{(m-1)(m-2)} \left( 1-\frac{|\Pv^{i}|}{P_0} \right)^{2-m} - \frac{{H_0P_0}}{m-1} |\Pv^{i}| && \text{if } m \neq 2 \text{ and}  \label{eq:PsiI1}\\
%\Psi^{i} &= -{H_0 P_0^2}\log \left( 1-\frac{|\Pv^{i}|}{P_0} \right) - {H_0P_0} |\Pv^{i}| && \text{if } m = 2. \label{eq:PsiI2}
%\end{align}
\begin{align}
\frac{\partial \Psi^{i}}{\partial \Pv^{i}} &= \frac{H_0 P_0^m}{2(m-1)}\left((P_0-|\Pv^{i}|)^{1-m}-(P_0+|\Pv^{i}|)^{1-m} \right) \frac{\Pv^{i}}{|\Pv^{i}|}.
\label{eq:PsiI1}
\end{align}
In \cite{Landis:2002} it has been shown that a free energy of similar form is differentiable, and all differentiations are provided analytically. 
The irreversible part $A^{i} := \partial \Psi^{i}/\partial \uu$ is strictly monotone in the remanent polarization,
\begin{align}
\langle A^i(\uu) - A^{i}(\zzz), \uu - \zzz \rangle \geq H_0 \|\Pv^{i} - \tilde \Pv^{i}\|_{L^2}^2.
\end{align}
Thus, strict monotonicity of $A = A^{r} + A^{i}$ follows the same way as in Section~\ref{sec:simplemodel}. However, $A^{i}$ is not Lipschitz continuous, as
\begin{align}
\langle A^i(\uu_1), \zzz \rangle \to  \infty &&\text{ as } \uu_1 \text{ approaches saturation.}
\end{align}
Although we can prove solvability of the time-discrete update equation, existence of a time-dependent solution is not guaranteed by our line of proof. However, in numerical examples, we did not meet any convergence problems. Concerning the introduction of polarization strains $\St^{i}(\Pv^{i})$, we refer to \cite{BotteroIdiart:2018}, where convexity of the potential was analyzed.

\section{Finite element implementation} \label{sec:fe}

\subsection{Finite element spaces} \label{sec:fespaces}
We propose to use conforming finite element spaces for the discretization of the variational inequality. For a simplicial finite element mesh $\mathcal{T} = \{T\}$ and $k\geq 0$, we use the nodal space of order $k+1$ for the displacements, the Brezzi-Douglas-Marini space $\mathcal{BDM}_{k+1}$ for the divergence-conforming dielectric displacement (see e.g. \cite{BoffiBrezziFortin:2013}), and piecewise defined remanent polarizations of order $k$,
\begin{align}
\uv \in \spaceU_h &:= \{\uv \in [H^1(\Omega)]^d: \uv|_T \in [P^{k+1}(T)]^d, \uv = 0 \text{ on } \Gamma_{fix}\} \subset \spaceU,\\
\Dv \in \spaceD_h &:= \mathcal{BDM}_{k+1} \subset \spaceD,\\
\Pv^{i} \in \spaceP &:= \{\Pv \in [L^2(\Omega)]^d: \Pv|_T \in [P^{k}(T)]^d\} \subset \spaceP.
\end{align}
Gauss' law of divergence free dielectric displacements is enforced by a Lagrangian multiplier in the sense of \eqref{eq:j}, which coincides with the electric potential $\phi$ and is discretized also by piecewise order $k$ functions,
\begin{align}
\phi \in \spaceW_h &:= \{\phi \in L^2(\Omega): \phi|_T \in P^{k}(T)\}.
\end{align}
Note that it is essential to choose dielectric displacement and its Lagrangian multiplier in a stable combination of spaces, such that not only
\begin{align}
\int_\Omega \opdiv \Dv\, \phi \, dx = 0 \text{ for all }\phi \in \spaceW_h \text{ implies } \opdiv \Dv = 0,
\end{align}
but also the discrete inf-sup condition holds independently of the mesh size,
\begin{align}
\inf_{\Dv \in \spaceD_h} \sup_{\phi \in \spaceW_h} \frac{\int_\Omega \opdiv \Dv\, \phi \, dx}{\|\Dv\|_{H(\opdiv)} \|\phi\|_{L^2}} \geq c.
\end{align}
This condition is satisfied for the pair $\mathcal{BDM}_{k+1}$ and piecewise order $k$ functions. For a thorough theoretical background we refer the interested reader to the exhaustive monograph \cite{BoffiBrezziFortin:2013} on mixed problems. We mention that other stable choices of finite element pairs exist. In our numerical results, we used a subspace of divergence-free $\mathcal{BDM}_{k+1}$ elements. Then the electric potential is discretized using only one degree of freedom per element, regardless of the approximation order of $\Dv$. In this case, it is impossible to evaluate the electric field as a derivative of $\varphi$. In any case, we recommend to use the constitutive law \eqref{eq:et}, as this leads to more accurate results.

\subsection{Regularization of the dissipation function} \label{sec:reg}

For solving variational inequalities, various numerical algorithms are proposed in the literature. Well-known for dual variational inequalities is the \emph{return-mapping algorithm} in different variants. There, after a predictor step, the generalized stress is projected back to the admissible set in the corrector step. Also for primal variational inequalities, as derived in this work, predictor/corrector iterations have been analyzed e.g. in the application of elasto-plasticity \cite[Section 12.2]{HanReddy:1999}. In all these methods, after an ``reversible'', i.e. linear, predictor step the remanent quantities are altered in a consistent way in the corrector step.

In contrast,  we propose to regularize the non-differentiable dissipation function, such that the problem can be solved ``all at once'' in a single Newton iteration. This \emph{regularization technique} has been analyzed for convergence and accuracy in \cite[Section~12.4]{HanReddy:1999}. Briefly, for a given regularization parameter $\varepsilon$, the non-differentiable dissipation $j$ is replaced by a smooth function $j_\varepsilon$, which differs from $j$ only by $\varepsilon$ (see Section~\ref{sec:numerics} for a special choice). If the findings from \cite{HanReddy:1999} can be transferred to the ferroelectric polarization problem, one can expect that for  the regularization parameter $\varepsilon$,
\begin{itemize}
	\item the solution $\uu_\varepsilon$ converges to $\uu$ in $\spaceV$ and
	\item $\|\uu - \uu_\varepsilon\|_\spaceV \leq c \sqrt{\varepsilon}$.
\end{itemize}
However, we do not aim at proving these convergence estimates for the present problem. \oldnew{}{We do not expect faster convergence as compared to return mapping algorithms with correct tangential stiffnesses. Indeed, iteration counts presented in Section~\ref{sec:cantilever} suggest a similar behavior. From our point of view, the main benefit of our approach lies in the much simpler implementation of the regularized dissipation function, as is described in Section~\ref{sec:impl}. Also, the possibility using higher order finite elements is given directly.}

\subsection{Implementation in Netgen/NGSolve} \label{sec:impl}

We use the software package Netgen/NGSolve available at {\tt https://ngsolve.org}. Netgen/NGSolve is an all-purpose finite element code, where high-order hierarchical finite elements for all element types (segments, triangles, quadrilaterals, tetraderda, hexahedra, prisms, \dots) and many different spaces (continuous or discontinuous, $\opcurl$ or $\opdiv$ conforming, \dots) are implemented. Via a Python interface, variational equations or even energy formulations can be entered symbolically. The (symbolic) equations are differentiated automatically, and a Newton iteration can be realized in a straightforward manner, without need to implement tedious tangent stiffnesses etc. by hand for each formulation.

In the present manuscript, an energy formulation was used, where the free energy was entered analytically, and a regularized version of the dissipation was added,
\begin{align}
j_\varepsilon(\dot \Pv^{i}) &:= \int_\Omega E_0 |\dot \Pv^{i}|_\varepsilon\, dx \text{ with } \\
|\dot \Pv^{i}|_\varepsilon &:= 
\left\{ \begin{array}{ll} 
|\Pv^{i}| - \varepsilon/2 & \text{if } |\Pv^{i}| \geq \varepsilon,\\
1/(2\varepsilon) |\Pv^{i}|^2 & \text{else.} \end{array} \right.
\end{align}

\section{Numerical results} \label{sec:numerics}

We provide a patch test example, where we reproduce known hysteresis effects and mechanical depolarization on a ferroelectric cube. In the second example, a ferroelectric cantilever is polarized by an applied electric field, and partially depolarized in bending. In both examples, we use the energies $\Psi^{r}$ and $\Psi^{i}$ as described in \eqref{eq:PsiR} and \eqref{eq:PsiI1}. We assumed the permittivity at constant strain $\betat = \betat^{S} = \epsilon^{-1} \It$ to  be independent of $\Pv^{i}$. The stiffness at constant electric field $\cten^{E}$ shall also be isotropic and independent of $\Pv^{i}$, and is characterized by Young's modulus $E_Y$ and Poisson ratio $\nu$. The piezoelectric tensor $\dten$ depends on $\Pv^{i}$ in the standard way as given in \cite[eq.~(4.3)]{Landis:2002}. Then the coupling tensor $\hten$ and the stiffness at constant dielectric displacement can be computed algebraically from $\betat^S$, $\dten$ and $\cten^{E}$, compare \cite{Landis:2002}. 

We include remanent straining, where $\St^{i}$ is assumed to depend directly on the polarization. We use the following formula provided in \cite{McMeekingLandis:2002},
\begin{align}
\St^{i}(\Pv^{i}) &:= \frac{S_0}{2 P_0} (3 \Pv^{i} \otimes \Pv^{i} - |\Pv^{i}|^2 \It).
\end{align}
\oldnew{}{This model is capable of mechanic depolarization under pressure, as is shown in \cite{McMeekingLandis:2002}, and is reproduced in Section~\ref{sec:patchtest}. It does not include purely elastic remanent straining in absence of polarization, though. To this end, an independent
	polarization strain as proposed by Landis \cite{Landis:2002} needs to be added. }

\subsection{Patch test} \label{sec:patchtest}

Consider a cube of side length $\SI{2}{\milli\meter}$ whose normal displacement is fixed at the three coordinate planes. The cube is electroded on top and bottom, the other faces are electrically insulated. We use material constants derived from the dimensionless constants proposed by \cite{McMeekingLandis:2002}. We set $E_0 = \SI{1000}{\volt\per\milli\meter}$, $P_0 = \SI{0.3}{\coulomb\per\meter\squared}$, $S_0 = 0.002$, $m=2$, $\epsilon = \SI{1.2e-8}{\coulomb\per\volt\per\meter}$, $E_Y = \SI{3e10}{\newton\per\meter\squared}$, $\nu = 0.3$, $d_{31} = \SI{-2.1e-10}{\meter\per\volt}$, $d_{33} = \SI{4.2e-10}{\meter\per\volt}$, $H_0 = \tfrac13\times10^{6}\, \si{\volt\meter\per\coulomb}$.  The regularization parameter from Section~\ref{sec:impl} is set to $\varepsilon = P_0 \times 10^{-6}$.

We provide hysteresis curves for the standard load case of electric polarization and depolarization by an electric field of $1.5 E_0$ in Figure~\ref{fig:hysteresis}. Moreover, Figure~\ref{fig:depol} shows the effect of mechanical depolarization by a compressive load of $\SI{200}{\newton\per\milli\meter\squared}$ applied to the top surface of the cube.

\begin{figure}
	\begin{center}
		\includegraphics[width=0.4\textwidth]{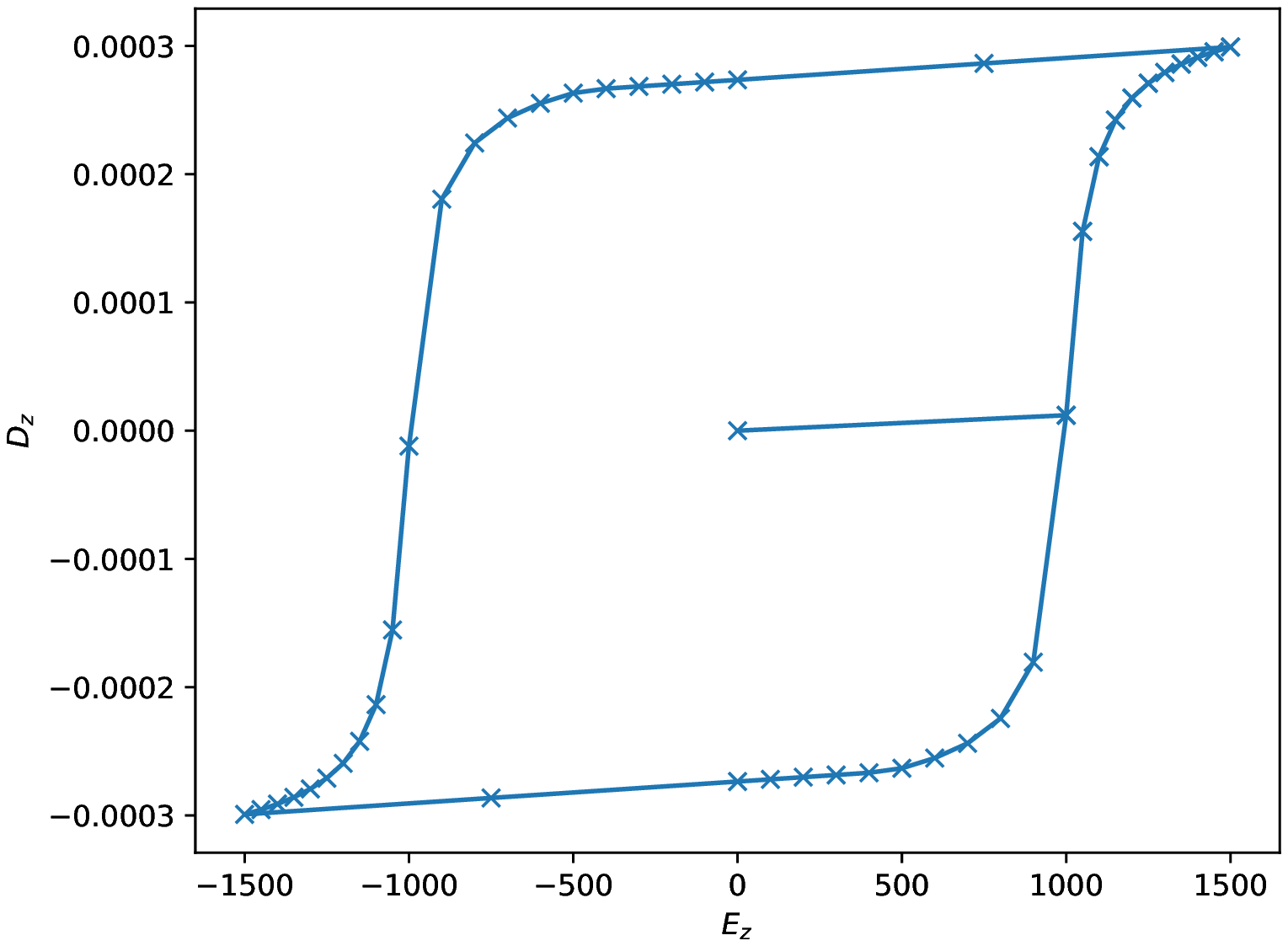}
		\includegraphics[width=0.4\textwidth]{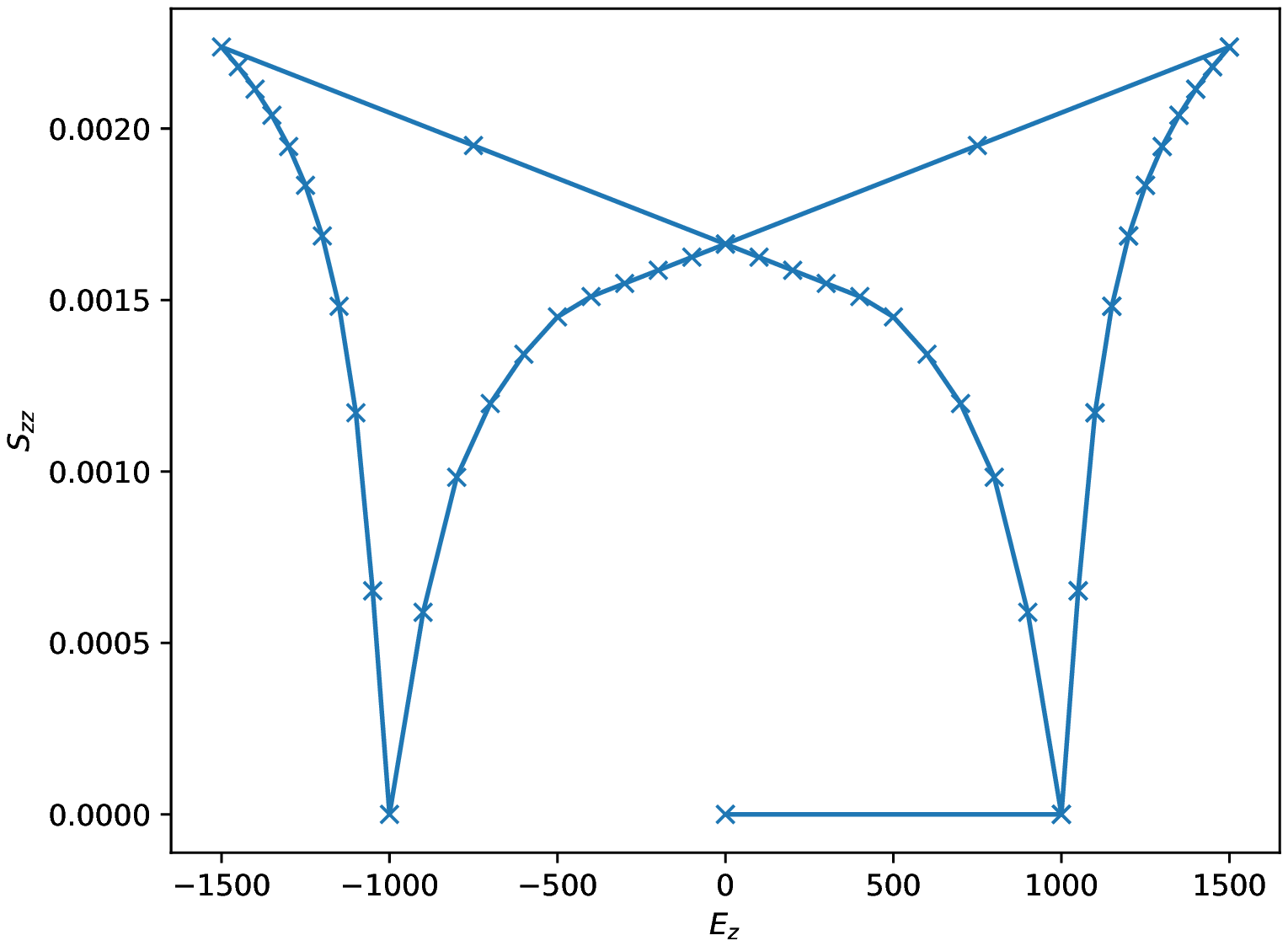}
	\end{center}
	\caption{Loadcase electric polarization and depolarization: Hystereses of dielectic displacement (left) and strain (right).} \label{fig:hysteresis}
\end{figure}
\begin{figure}
	\begin{center}
		\includegraphics[width=0.4\textwidth]{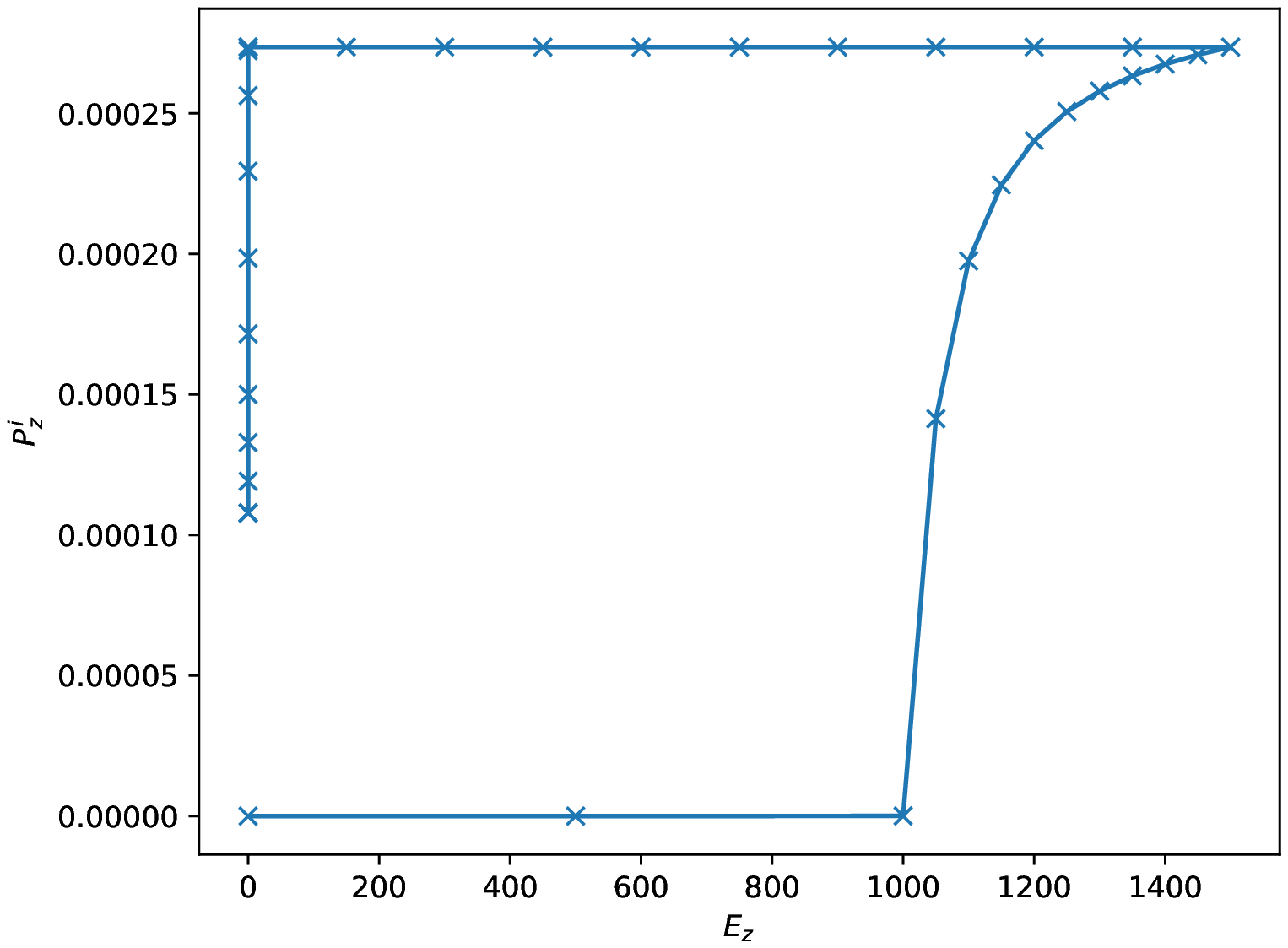}\\
		\includegraphics[width=0.4\textwidth]{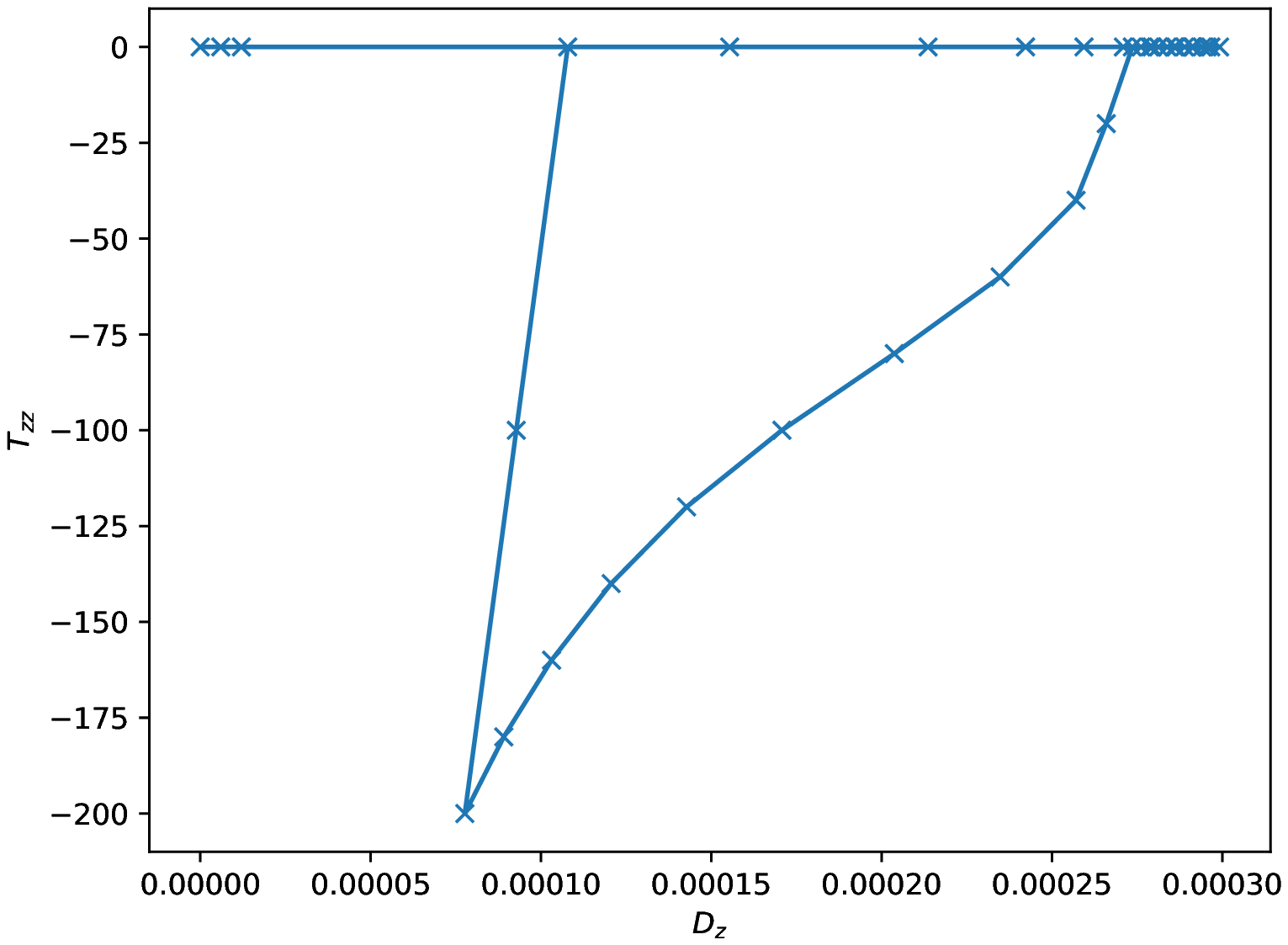}
		\includegraphics[width=0.4\textwidth]{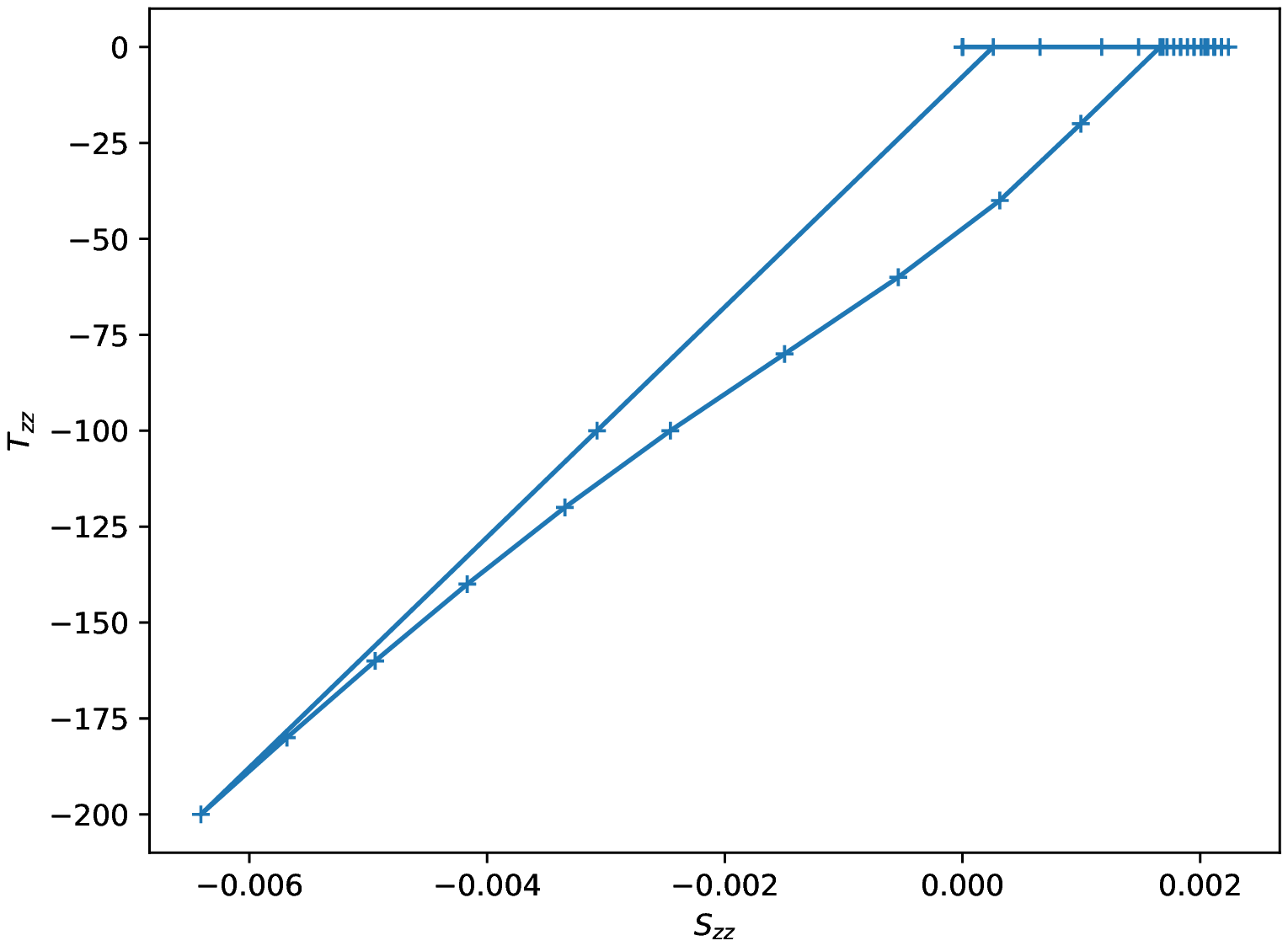}
	\end{center}
	\caption{Loadcase mechanic depolarization: Remanent polarization as a function of the applied electric field (top), stress evolution over dielectric displacement (lower left) and over strain (lower right).} \label{fig:depol}
\end{figure}

\subsection{Ferroelectric cantilever} \label{sec:cantilever}

The second example is that of a ferroelectric cantilever beam of length $\SI{2}{\milli\meter}$ and cross section $2\times2\, \si{\milli\meter}$, which was proposed by \cite{ZouariZinebBenjeddou:2011}. The clamped end as well as the tip of the beam are electroded. In the first loading cycle, the beam is polarized applying an electric potential to the beam tip while keeping the other electrode grounded. The electric field is applied in 12 load steps amounting to three times the coercive electric field, then lowered back to ground. In the second step, a vertical tip force of $\SI{16}{\newton}$ is applied to the tip surface. Due to the compression in the upper part of the beam, the material depolarizes mechanically in this section. 

The obtained values cannot be compared directly to the original work of \cite{ZouariZinebBenjeddou:2011}, as in this reference a different material model based on Kamlah's work \cite{Kamlah:2001} is used. However, we chose material constants close to their values, using $E_0 = \SI{1000}{\volt\per\milli\meter}$, $P_0 = \SI{0.3}{\coulomb\per\meter\squared}$, $S_0 = 0.002$, $m=1.1$, $\epsilon = \SI{1.5e-8}{\coulomb\per\volt\per\meter}$, $E_Y = 10^{6}\, \si{\newton\per\meter\squared}$, $\nu = 0.3$, $d_{31} = \SI{-2.74e-10}{\meter\per\volt}$, $d_{33} = \SI{5.93e-10}{\meter\per\volt}$, $H_0 = 10^{6}\, \si{\volt\meter\per\coulomb}$.  The $d_{15}$ effect was neglected in the current implementation. The regularization parameter from Section~\ref{sec:impl} is set to $\varepsilon = P_0 \times 10^{-4}$.

We used two different unstructured tetrahedral meshes -- a coarse one consisting of 141 elements, and a fine one consisting of 6848 elements. For the reference solution, we chose order $k=1$ as described in Section~\ref{sec:fespaces} on the fine mesh. This means second order displacement elements and first order polarization/dielectric displacements and leads to a total of 167\,451 degrees of freedom. The absolute value of the remanent polarization $|\Pv^{i}|$ after bending, and the corresponding stress distribution $T_{xx}$, are depicted in Figure~\ref{fig:cantilever}. As observed in \cite{ZouariZinebBenjeddou:2011}, the cantilever depolarizes in the region close to the clamped end where compressive stresses arise. 
This depolarization reduces the stress level there to a maximum of $-128.22~\si{\newton\per\milli\meter\squared}$, which compares well to the values listed in the original reference. However, due to the different description of the ferroelectric material, we observe stronger depolarization.
%Indeed, the material already depolarizes slightly when going back to zero voltage on the electrodes, which is due to the curved form of the hysteresis (see Figure~\ref{fig:hysteresis}). 

We compare our results to a second computation on the very coarse mesh using high order $k=2$, i.e. third order displacement elements and second order polarizations/dielectric displacements. In this case, we end up with 9653 degrees of freedom in total, while maintaining good accuracy (see Figure~\ref{fig:cantilever_coarse}). Note that, in both cases, neither polarization nor stresses have been post-processed in any way, but the finite element solution is depicted directly.
\oldnew{}{In both cases, the vertical tip force was added in eight equal-sized load steps. In each load steps, between 8 and 11 Newton iterations had to be done in order to reduce the $\ell^2$-norm of the residual by a factor of $10^{-6}$. These iteration counts compare well to counts provided by \cite{SemenovLiskowskyBalke:2010} for their return-mapping algorithm using correct tangent moduli.}

In Figure~\ref{fig:pix}, we plot the distribution of the $x$-component of the (scaled) irreversible polarization $P^{i}_x/P_0$ over the central line on top of the cantilever $\{ (x, y=b/2, z=h)\}$. In comparison to the values presented by Zouari et al. \cite{ZouariZinebBenjeddou:2011}, we observe that the polarization drops further to approximately 65\% as compared to saturation. Similar to their findings, we see an almost linear distribution of the remanent polarization along the major part of the cantilever, and a strong decrease close to the clamped end.

\begin{figure}
	\begin{center}
		\includegraphics[width=0.49\textwidth]{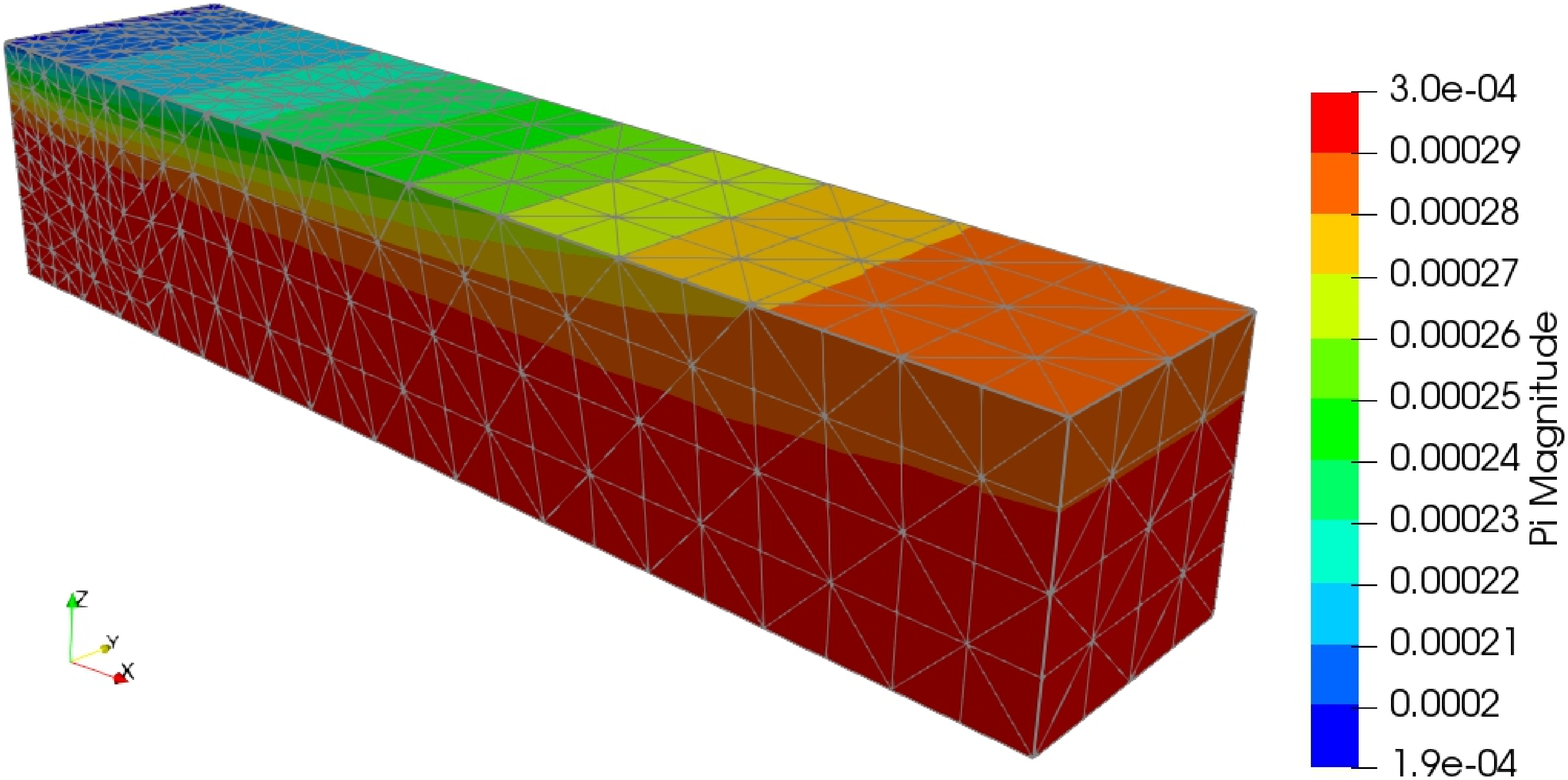}
		\includegraphics[width=0.49\textwidth]{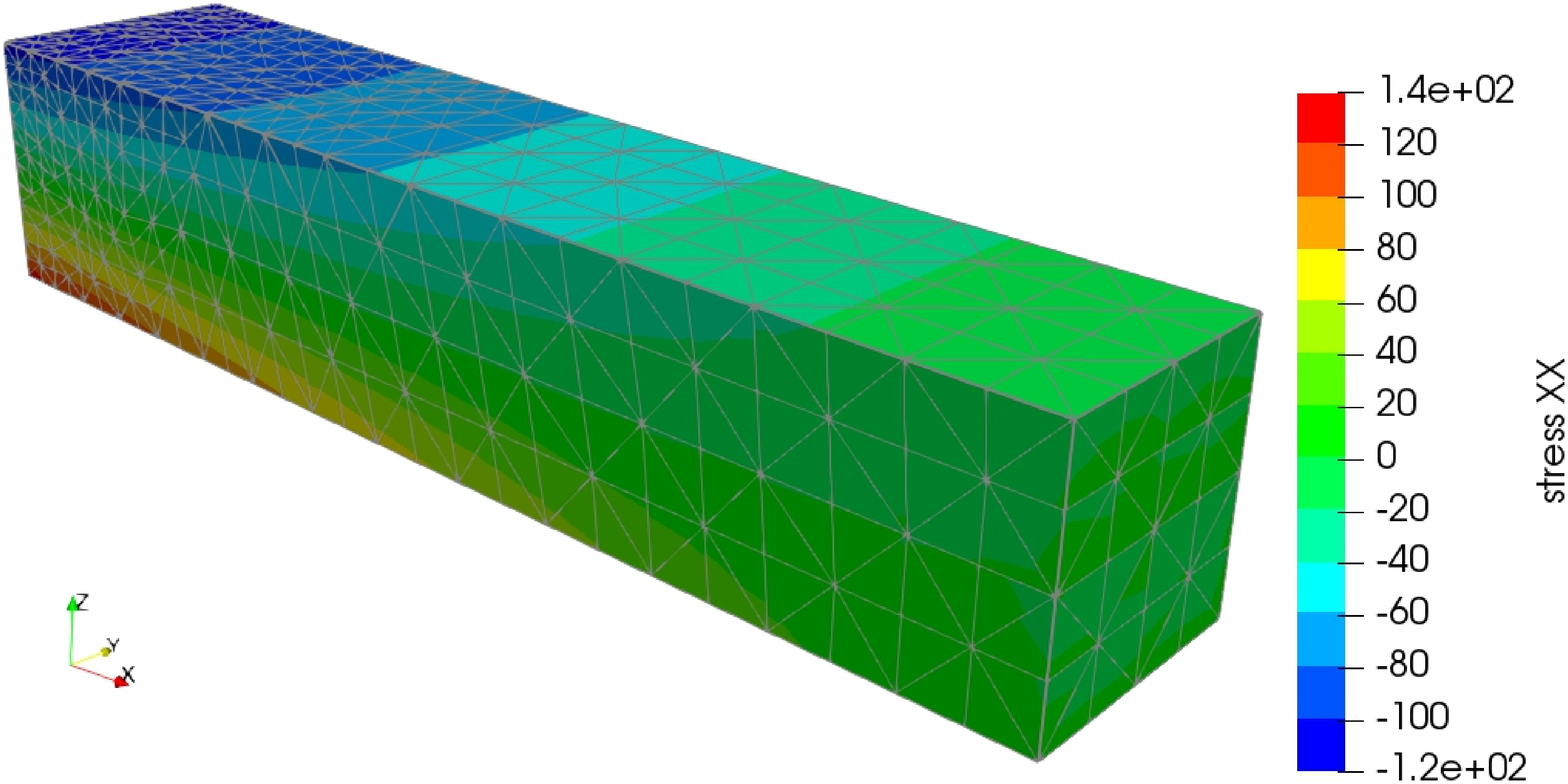}
	\end{center}
	\caption{Mechanical depolarization of a polarized cantilever beam under a vertical tip force -- absolute value of remanent polarization $|\Pv^{i}|$ (left) and stress distribution $T_{xx}$. Finite elements as described in Section~\ref{sec:fespaces} for order $k=1$ are used.} \label{fig:cantilever}
\end{figure}
\begin{figure}
	\begin{center}
		\includegraphics[width=0.49\textwidth]{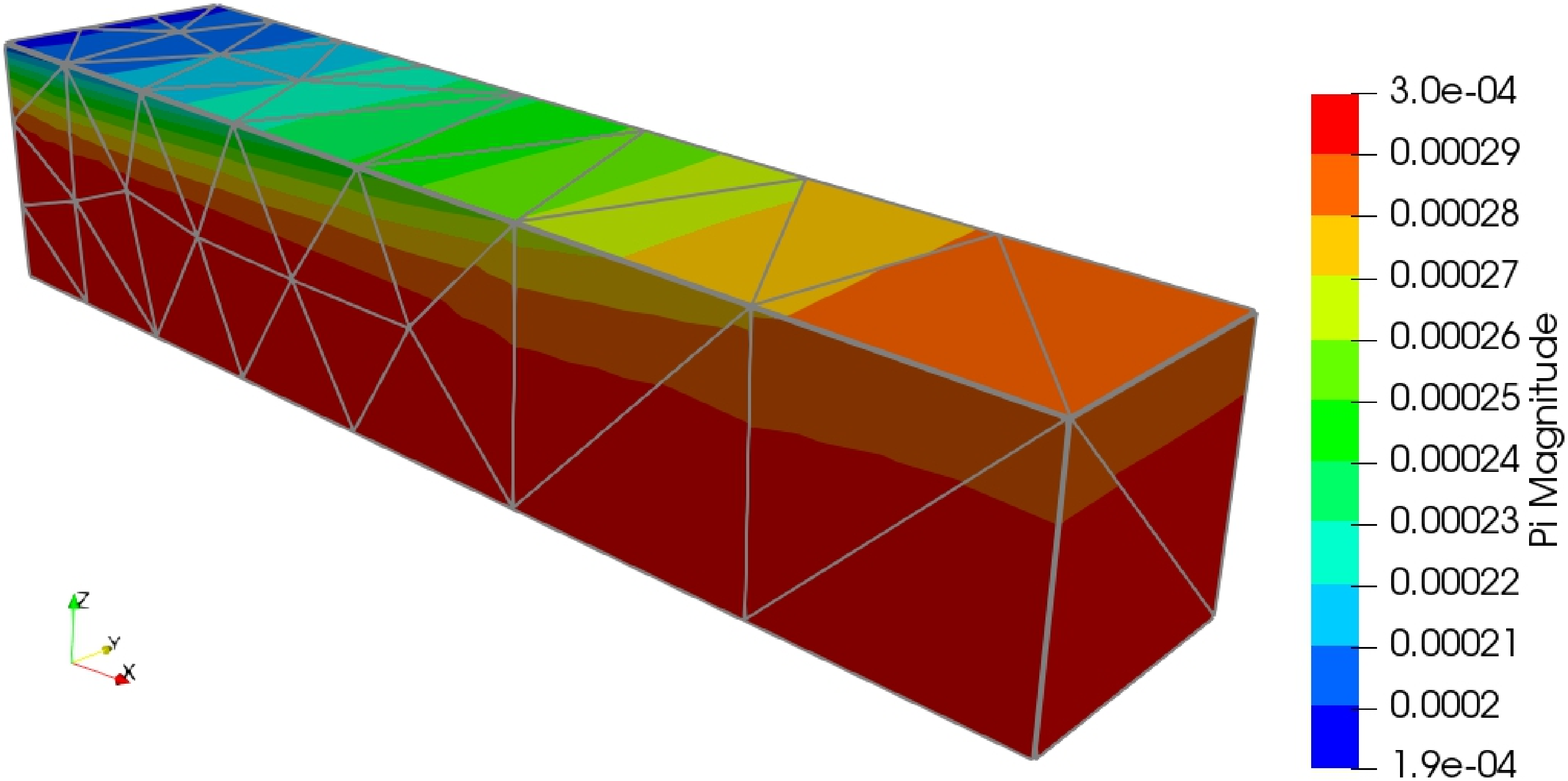}
		\includegraphics[width=0.49\textwidth]{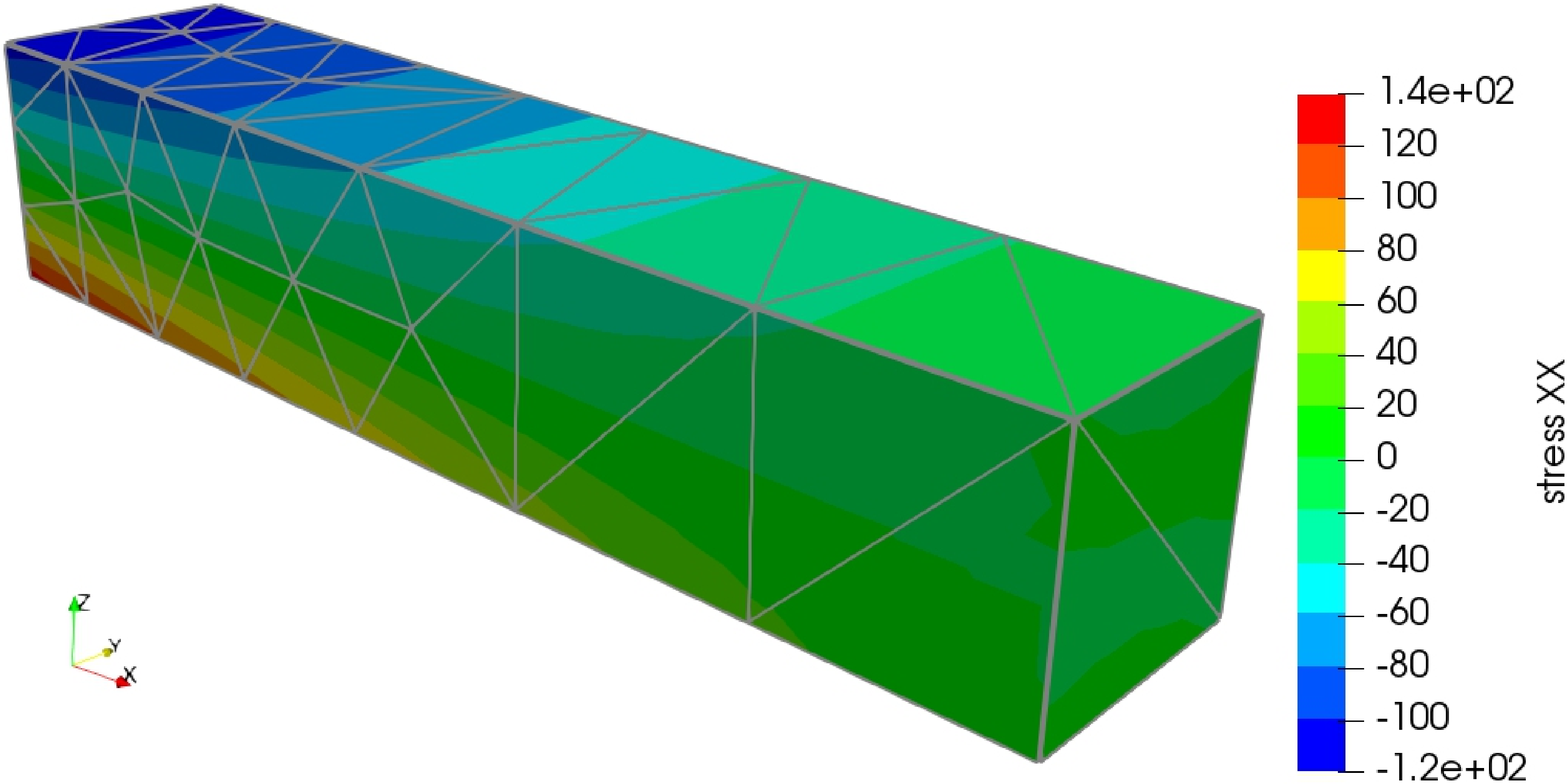}
	\end{center}
	\caption{Mechanical depolarization of a polarized cantilever beam under a vertical tip force -- absolute value of remanent polarization $|\Pv^{i}|$ (left) and stress distribution $T_{xx}$. Finite elements as described in Section~\ref{sec:fespaces} for order $k=2$ are used.} \label{fig:cantilever_coarse}
\end{figure}
\begin{figure}
	\begin{center}
		\includegraphics[width=0.49\textwidth]{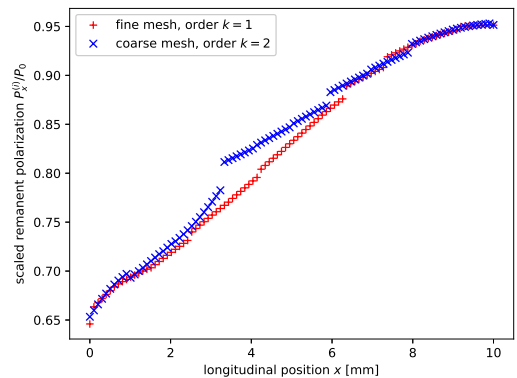}
	\end{center}
	\caption{Mechanical depolarization of a polarized cantilever beam under a vertical tip force -- remanent polarization $P^{i}_x/P_0$ on the top surface along the axial coordinate for different discretizations and finite element orders $k=1$ and $k=2$.} \label{fig:pix}
\end{figure}

\section{Conclusion}
In this contribution, we have formulated the polarization problem in ferroelectric media as a variational inequality. In this framework, we were able to show existence and uniqueness of a solution to the time-discrete update problem under reasonable assumptions on the free energy. Under stronger assumptions, it is possible to prove existence of a solution to the time-dependent problem. We propose to choose finite elements such that these assumptions are satisfied also in the discretized setting. To solve the discrete problems, we regularize the non-differentiable dissipation function. Then is is possible to solve the optimization problem all at once by a single Newton iteration. All numerical results provided in this contributions have been generated in the open-source software package Netgen/NGSolve, which provides all the non-standard elements of arbitrary order as well as automatic differentiation of the energies.

\section{Acknowledgements}
Martin Meindlhumer acknowledges support of Johannes Kepler University Linz, Linz Institute of Technology (LIT).\\
This work has been supported by the Linz Center of Mechatronics (LCM) in the framework of the Austrian COMET-K2 program.

\bibliographystyle{plain}
\bibliography{Polarization}

\begin{thebibliography}{10}

\bibitem{BassiounyGhalebMaugin:1988a}
E.~Bassiouny, A.F. Ghaleb, and G.A. Maugin.
\newblock Thermodynamical formulation for coupled electromechanical hysteresis
  effects--{I}. {B}asic equations.
\newblock {\em International Journal of Engineering Science},
  26(12):1279--1295, 1988.

\bibitem{BassiounyGhalebMaugin:1988b}
E.~Bassiouny, A.F. Ghaleb, and G.A. Maugin.
\newblock Thermodynamical formulation for coupled electromechanical hysteresis
  effects--{I}{I}. {P}oling of ceramics.
\newblock {\em International Journal of Engineering Science},
  26(12):1297--1306, 1988.

\bibitem{BassiounyMaugin:1989a}
E.~Bassiouny and G.A. Maugin.
\newblock Thermodynamical formulation for coupled electromechanical hysteresis
  effects--{I}{I}{I}. {P}arameter identification.
\newblock {\em International Journal of Engineering Science}, 27(8):975--987,
  1989.

\bibitem{BassiounyMaugin:1989b}
E.~Bassiouny and G.A. Maugin.
\newblock Thermodynamical formulation for coupled electromechanical hysteresis
  effects--{I}{V}. {C}ombined electromechanical loading.
\newblock {\em International Journal of Engineering Science}, 27(8):989--1000,
  1989.

\bibitem{BoffiBrezziFortin:2013}
D.~Boffi, F.~Brezzi, and M.~Fortin.
\newblock {\em Mixed finite element methods and applications}, volume~44 of
  {\em Springer Series in Computational Mathematics}.
\newblock Springer, Heidelberg, 2013.

\bibitem{BotteroIdiart:2018}
C.J. Bottero and M.I. Idiart.
\newblock An evaluation of a class of phenomenological theories of
  ferroelectricity in polycrystalline ceramics.
\newblock {\em Journal of Engineering Mathematics}, 113(1):13--22, 2018.

\bibitem{Brezis:1968}
H.~Brezis.
\newblock Equations et in{\'e}quations non lin{\'e}aires dans les espaces
  vectoriels en dualit{\'e}.
\newblock {\em Annales de l'institut Fourier}, 18(1):115--175, 1968.

\bibitem{CocksMcMeeking:1999}
A.C.F. Cocks and R.M. Mcmeeking.
\newblock A phenomenological constitutive law for the behaviour of
  ferroelectric ceramics.
\newblock {\em Ferroelectrics}, 228(1):219--228, 1999.

\bibitem{HanReddy:1999}
W.~Han and B.D. Reddy.
\newblock {\em Plasticity: Mathematical Theory and Numerical Analysis},
  volume~9.
\newblock Springer Science \& Business Media, 1999.

\bibitem{HuberFleck:2001}
J.E. Huber and N.A. Fleck.
\newblock Multi-axial electrical switching of a ferroelectric: theory versus
  experiment.
\newblock {\em Journal of the Mechanics and Physics of Solids}, 49(4):785--811,
  2001.

\bibitem{HWANG19952073}
S.C. Hwang, C.S. Lynch, and R.M. McMeeking.
\newblock Ferroelectric/ferroelastic interactions and a polarization switching
  model.
\newblock {\em Acta Metallurgica et Materialia}, 43(5):2073 -- 2084, 1995.

\bibitem{jayendiran2016finite}
R.~Jayendiran, M.~Ganapathi, and T.B. Zineb.
\newblock Finite element analysis of switching domains using ferroelectric and
  ferroelastic micromechanical model for single crystal piezoceramics.
\newblock {\em Ceramics International}, 42(9):11224--11238, 2016.

\bibitem{Kamlah:2001}
M.~Kamlah.
\newblock Ferroelectric and ferroelastic piezoceramics--modeling of
  electromechanical hysteresis phenomena.
\newblock {\em Continuum Mechanics and Thermodynamics}, 13(4):219--268, 2001.

\bibitem{KamlahTsakmakis:1999}
M.~Kamlah and C.~Tsakmakis.
\newblock Phenomenological modeling of the non-linear electro-mechanical
  coupling in ferroelectrics.
\newblock {\em International Journal of Solids and Structures}, 36(5):669 --
  695, 1999.

\bibitem{Klinkel:2006}
S.~Klinkel.
\newblock A phenomenological constitutive model for ferroelastic and
  ferroelectric hysteresis effects in ferroelectric ceramics.
\newblock {\em International Journal of Solids and Structures},
  43(22-23):7197--7222, 2006.

\bibitem{Landis:2002}
C.M. Landis.
\newblock Fully coupled, multi-axial, symmetric constitutive laws for
  polycrystalline ferroelectric ceramics.
\newblock {\em Journal of the Mechanics and Physics of Solids}, 50(1):127--152,
  2002.

\bibitem{LinMuliana:2014}
C.-H. Lin and A.~Muliana.
\newblock Micromechanical models for the effective time-dependent and nonlinear
  electromechanical responses of piezoelectric composites.
\newblock {\em Journal of Intelligent Material Systems and Structures},
  25(11):1306--1322, 2014.

\bibitem{McMeekingLandis:2002}
R.M. McMeeking and C.M. Landis.
\newblock A phenomenological multi-axial constitutive law for switching in
  polycrystalline ferroelectric ceramics.
\newblock {\em International Journal of Engineering Science},
  40(14):1553--1577, 2002.

\bibitem{MieheRosatoKiefer:2011}
C~Miehe, D~Rosato, and B~Kiefer.
\newblock Variational principles in dissipative electro-magneto-mechanics: A
  framework for the macro-modeling of functional materials.
\newblock {\em International Journal for Numerical Methods in Engineering},
  86(10):1225--1276, 2011.

\bibitem{MielkeRoubicek:2015}
A.~Mielke and T.~Roub\'{\i}\v{c}ek.
\newblock {\em Rate-independent systems}, volume 193 of {\em Applied
  Mathematical Sciences}.
\newblock Springer, New York, 2015.
\newblock Theory and application.

\bibitem{SandsGuz:2013}
C.~M. Sands and I.~A. Guz.
\newblock Unidimensional model of polarisation changes in piezoelectric
  ceramics based on the principle of maximum entropy production.
\newblock {\em Journal of Engineering Mathematics}, 78(1):249--259, Feb 2013.

\bibitem{SemenovKesslerLiskowskyBalke:2006}
A.S. Semenov, H.~Kessler, A.~Liskowsky, and H.~Balke.
\newblock On a vector potential formulation for 3d electromechanical finite
  element analysis.
\newblock {\em Communications in Numerical Methods in Engineering},
  22(5):357--375, 2006.

\bibitem{SemenovLiskowskyBalke:2010}
A.S. Semenov, A.C. Liskowsky, and H.~Balke.
\newblock Return mapping algorithms and consistent tangent operators in
  ferroelectroelasticity.
\newblock {\em International journal for numerical methods in engineering},
  81(10):1298--1340, 2010.

\bibitem{SofoneaMatei:2011}
M.~Sofonea and A.~Matei.
\newblock History-dependent quasi-variational inequalities arising in contact
  mechanics.
\newblock {\em European Journal of Applied Mathematics}, 22(5):471--491, 2011.

\bibitem{StarkNeumeisterBalke:2016a}
S.~Stark, P.~Neumeister, and H.~Balke.
\newblock Some aspects of macroscopic phenomenological material models for
  ferroelectroelastic ceramics.
\newblock {\em International Journal of Solids and Structures}, 80:359 -- 367,
  2016.

\bibitem{ZouariZinebBenjeddou:2011}
W.~Zouari, T.B. Zineb, and A.~Benjeddou.
\newblock A ferroelectric and ferroelastic 3d hexahedral curvilinear finite
  element.
\newblock {\em International Journal of Solids and Structures}, 48(1):87--109,
  2011.

\end{thebibliography}

\end{document}